\theoremstyle{plain}
\newtheorem{thm}{Theorem}[section]
\newtheorem{lemma}[thm]{Lemma}
\newtheorem{cor}[thm]{Corollary}
\newtheorem{prop}[thm]{Proposition}
\theoremstyle{definition}
\newtheorem{remark}[thm]{Remark}
\def\mequal{\mathrel{\mathpalette\@mvereq{\hbox{\sevenrm m}}}}
\def\@mvereq#1#2{\lower.5\p@\vbox{\baselineskip\z@skip\lineskip1.5\p@
    \ialign{$\m@th#1\hfil##\hfil$\crcr#2\crcr=\crcr}}}
\def\partr#1#2{/\kern-.08333em/_{#1,#2}^{\phantom{.}}}
\def\invpartr#1#2{/\kern-.08333em/_{#1,#2}^{-1}}
\def\hpartr#1#2{/\kern-.08333em/_{#1,#2}^{h}}
\def\Epartr#1#2{/\kern-.08333em/_{#1,#2}^{E}}
\def\newdot{{\kern.8pt\cdot\kern.8pt}}
\def\,{\relax\ifmmode\mskip\thinmuskip\else\thinspace\fi}
\def\{{\relax\ifmmode\lbrace\else $\lbrace$\fi}
\def\}{\relax\ifmmode\rbrace\else $\rbrace$\fi}
\font\sevenrm=cmr7
\newcommand\NN{\mathbb{N}}
\newcommand\RR{\mathbb{R}}
\newcommand{\SC}{{\mathscr C}}
\newcommand{\SD}{{\mathscr D}}
\newcommand{\SE}{{\mathscr E}}
\newcommand{\SF}{{\mathscr F}}
\newcommand{\SK}{{\mathscr K}}
\newcommand{\SL}{{\mathscr L}}
\newcommand{\ST}{{\mathscr T}}
\def\grad{\mathpal{grad}}
\def\grad{\mathop{\rm grad}\nolimits}
\def\di{\displaystyle}
\def\f{\frac}
\def\a{\alpha }
\def\b{\beta }
\def\d{\delta }
\def\G{\Gamma }
\def\g{\gamma }
\def\n{\nabla }
\begin{document}

\title[]{Medians and means in Finsler geometry}

\author[M. Arnaudon]{Marc Arnaudon} \address{Laboratoire de Math\'ematiques et
  Applications\hfill\break\indent CNRS: UMR 6086\hfill\break\indent
  Universit\'e de Poitiers, T\'el\'eport 2 - BP 30179\hfill\break\indent
  F--86962 Futuroscope Chasseneuil Cedex, France}
\email{marc.arnaudon@math.univ-poitiers.fr}

\author[F. Nielsen]{Frank Nielsen} \address{Laboratoire d'informatique (LIX)\hfill\break\indent
\'Ecole Polytechnique\hfill\break\indent 91128 Palaiseau Cedex,
France\hfill\break\indent
and\hfill\break\indent
Sony Computer Science Laboratories, Inc\hfill\break\indent
Tokyo, Japan} \email{Frank.Nielsen@acm.org}

%
%

\begin{abstract}\noindent
We investigate existence and uniqueness of $p$-means $e_p$ and the median $e_1$ of a probability measure $\mu$ on a Finsler manifold, in relation with the convexity of the support of $\mu$. We prove that $e_p$ is the limit point of a continuous time gradient flow. Under some additional condition which is always satisfied for $p\ge 2$, a discretization of this path converges to $e_p$.
This provides an algorithm for determining those Finsler center points. 
\end{abstract}

\maketitle
\tableofcontents

%
%

\section{Introduction}\label{Section1}
\setcounter{equation}0

The geometric barycenter of a set of points is the point which minimizes the sum of the {\it squared} distances to these points. 
It is the most traditional estimator is statistics that is however sensitive to outliers~\cite{breakdown}.
Thus it is natural to replace the average distance squaring (power~$2$) by taking the power of $p$ for some $p\in [1,2)$.
This leads to the definition of $p$-means. 
When $p=1$, the minimizer is the median of the set of points, very often used in robust statistics~\cite{breakdown}. 
In many applications, $p$-means with some  $p\in (1,2)$ give the best compromise. 
For existence and uniqueness in Riemannian manifolds under convexity conditions on the support of the measure, see Afsari~\cite{Afsari:10}.

The Fermat-Weber problem concerns finding the median $e_1$ of a set of points in an Euclidean space.  
Numerous authors worked out algorithms for computing~$e_1$. 
The first algorithm was proposed by Weiszfeld in~\cite{Weiszfeld:37} (see also~\cite{Vardi:00}). 
It has been extended to sufficiently small domains in Riemannian manifolds with nonnegative curvature by Fletcher and al. in~\cite{Fletcher:09}. 
A complete generalization to manifolds with positive or negative curvature (under some convexity conditions in positive curvature), has been recently given by Yang in~\cite{Yang:10}.

The Riemannian barycenter or Karcher mean of a set of points in a manifold or more generally of a probability measure has been extensively studied, see e.g. \cite{Karcher:77}, \cite{Kendall:90}, \cite{Kendall:91}, \cite{Emery-Mokobodzki:91}, \cite{Picard:94}, \cite{Arnaudon-Li:05}, \cite{Corcuera-Kendall:99}, where questions of existence, uniqueness, stability, relation with martingales in manifolds, behavior  when measures are pushed by stochastic flows have been considered. The Riemannian barycenter corresponds to $p=2$ in the above description. Computation of Riemannian barycenters by gradient descent has been performed by Le in \cite{Le:04}.

The aim of this paper is to extend to the context of Finsler manifolds the results on existence and uniqueness of $p$-means of probability measures, as well as algorithms for computing them. Some convexity is needed, and as we shall see the fact that comparison results for triangles as Alexandroff and Toponogov theorems do not exist impose more restrictions on the support of the probability measure. As a consequence, the sharp results on existence and uniqueness established by Afsari~\cite{Afsari:10} and the algorithm for computing means of Yang in~\cite{Yang:10} do not extend to Finsler manifolds.

The motivation for this work primarily comes from signal filtering and denoising in the context of Diffusion Tensor Imaging (DTI), High Angular Resolution Imaging (HARDI, see~\cite{Shen:06}, \cite{PKD:09}, \cite{Astola-Florack:09}), Orientation Distribution Function (ODF), active contours~\cite{MPAT:08}. 
Applications with experimental results of an implementation will be reported in forthcoming papers.

Information geometry at its heart considers the differential geometry
nature of probability distributions induced by a divergence function.
In probability theory, invariance by monotonic re-parameterization and
sufficient statistics yields  the class of
$f$-divergences~\cite{informationgeometry-2000} $I_f(p,q)=\int p(x)
f(\frac{q(x)}{p(x)}) \mathrm{d}x$ that includes the Kullback-Leibler (KL)
information-theoretic  divergence $\mathrm{KL}(p,q)=\int p(x)
\log\frac{p(x)}{q(x)} \mathrm{d}x$ as its prominent member (for
$f(t)=-\log t$).
It is well-known that the KL divergence (better known as the relative
entropy) yields a dually flat structure~\cite{informationgeometry-2000}
generalizing the (self-dual) Euclidean space.

Because divergences are usually asymmetric and violate the triangle
inequality they have not been extensively considered from an {\it
algorithmic} point of view.
Indeed, the triangle inequality property is often used in computational
geometry to design {\it efficient} algorithms by allowing various
``pruning'' techniques~\cite{Elkan:2003,MetricSkyline:2009}. Computational
geometry has thus mostly considered metric spaces for keeping the triangle
inequality properties.

One can metrize divergences.
The KL divergence can be symmetrized either into the Jeffreys divergence
$J(p,q)=\mathrm{KL}(p,q)+\mathrm{KL}(q,p)$ or the Jensen-Shannon (JS)
divergence:
\begin{align*}\mathrm{JS}(p,q)&=\mathrm{KL}(p,\frac{p+q}{2})+\mathrm{KL}(q,\frac{p+q}{2})\\&=\int
(p(x)\log \frac{2p(x)}{p(x)+q(x)} + q(x)\log \frac{2q(x)}{p(x)+q(x)} )
\mathrm{d}x .\end{align*}
The latter is preferred in practice because it is bounded, and its square
root yields a metric that can be embedded into a Hilbert
space~\cite{FugledeTopsoe:2004}.

Finsler distances, arising from the underlying the Finsler metrics, are
attractive as they preserve the triangle inequality~\cite{Shen:06} for
efficient algorithmics but potentially model asymmetric distances.

In information geometry, the regular divergence $D$ associated to a
Finslerian metric distance $d$ can be defined as $D(p,q)=d^2(p,q)$.
Observe that the Finslerian-based divergence looses then the triangle
inequality property~\cite{Shen:06}.
(eg., the squared Euclidean distance does not satisfy the triangle
inequality).

\section{Preliminaries}\label{Section2}
\setcounter{equation}0

Let $M$ be a smooth manifold. On $M$ we consider a Finsler
structure $F : TM\to \RR_+$. For any $x\in M$, $V,X,Y,Z\in T_xM$
such that $V\not=0$, let
\begin{equation}
\label{1}
g_V(X,Y):=\f12\f{\partial^2}{\partial s\partial
t}\Big|_{(s,t)=(0,0)}F^2(V+sX+tY).
\end{equation}
(we shall also use the notation ${<}X,Y{>}_V=g_V(X,Y)$) and
\begin{equation}
\label{2}
{<}X,Y,Z{>}_V:=\f14\f{\partial^3}{\partial r\partial s\partial
t}\Big|_{(r,s,t)=(0,0,0)}F^2(V+rX+sY+tZ).
\end{equation}
We have
\begin{equation}
\label{3}
{<}X,Y,Z{>}_V=\f12\f{\partial}{\partial r}\Big|_{r=0}g_{V+rX}(Y,Z)
\end{equation}
and in particular since $F^2$ is $2$-homogeneous and $V\mapsto
g_V(X,Y)$ is $0$-homogeneous,
\begin{equation}
\label{4}
{<}V,Y,Z{>}_V=0.
\end{equation}
Let $V$ be a non-vanishing vector field on $M$. The Chern
connection $\n^V$ is torsionfree and almost metric, 
and can be characterized by 
\begin{equation}
\label{5}
X{<}Y,Z{>}_V={<}\n^V_XY,Z{>}_V+{<}Y,\n^V_XZ{>}_V+2{<}\n^V_XV,Y,Z{>}_V.
\end{equation}
More precisely, parameterizing locally $TM$ by coordinates $$(x^1,\ldots,x^m,y^1=dx^1,\ldots,y^m=dx^m),$$ defining the geodesic coefficients as 
\begin{equation}
\label{25}
G^i(y)=\f14g^{ik}(y)\left(2\f{\partial g_{jk}}{\partial x^l}-\f{\partial g_{jl}}{\partial x^k}\right)y^jy^l,\quad y\in TM\backslash\{0\},
\end{equation}
letting
\begin{equation}
\label{26}
N_j^i=\f{\partial G^i}{\partial y^j},\quad \f{\d}{\d x^i}=\f{\partial}{\partial x^i}-N_i^k(y)\f{\partial }{\partial y^k}\in T_y\left(TM\backslash\{0\}\right),
\end{equation}
then the Christoffel symbols of the Chern connection are given by 
\begin{equation}
\label{27}
\G_{ij}^k=\f12 g^{kl}\left(\f{\d g_{lj}}{\d x^i}+\f{\d g_{il}}{\d x^j}-\f{\d g_{ij}}{\d x^l}\right)
\end{equation}
(see \cite{Chern-Shen:05}).
Note that defining 
\begin{equation}
\label{28}
\d y^i=dy^i+N_j^i(y)dx^j
\end{equation}
we have for a smooth function $f : TM\backslash\{0\}\to \RR$
\begin{equation}
\label{29}
df =\f{\d f}{\d x^i}dx^i+\f{\partial f}{\partial y^i}\d y^i.
\end{equation}

The Chern curvature
tensor is defined by the equation
\begin{equation}
\label{6}
R^V(X,Y)Z:=\n^V_X\n^V_YZ-\n^V_Y\n^V_XZ-\n^V_{[X,Y]}Z,
\end{equation}
and the flag curvature is 
\begin{equation}
\label{7}
\SK(V,W):=\f{{<}R^V(V,W)W,V{>}}{{<}V,V{>}_V{<}W,W{>}_V-{<}V,W{>}_V^2},
\end{equation}
for two non collinear $V,W\in T_xM$.

We say that $M$ has nonpositive flag curvature if for all $V,W$, $\SK(V,W)\le 0$. 

The tangent curvature of two vectors $V,W\in T_xM$ is defined as 
\begin{equation}
\label{39}
\ST_V(W)={<}\n_{ W}^{ W}{\tilde W}-\n_{ W}^{ V}{\tilde W},{ V}{>}_V
\end{equation}
where ${\tilde W}$ is a vector field satisfying  ${\tilde W}_x=W$. For a nonnegative constant $\d\ge 0$ we say that $\ST \ge -\d$ or $\ST\le \d$ if respectively
\begin{equation}
\label{40}
\ST_V(W)\ge -\d F(V)F(W)^2\quad\hbox{or}\quad \ST_V(W)\le \d F(V)F(W)^2.
\end{equation}

For $x\in M$ we define 
\begin{equation}
 \label{46}
\SC(x)=\sup_{v,w\in T_xM\backslash\{0\}}\sqrt{\f{{<}v,v{>}_v}{{<}v,v{>}_w}}, \quad \SD(x)=\sup_{v,w\in T_xM\backslash\{0\}}\sqrt{\f{{<}v,v{>}_w}{{<}v,v{>}_v}}.
\end{equation}

\begin{remark}
 For applications in active contours, a ``Wulff shape'' is given which does not depend on $x$ and defines the Finsler structure. From this shape $\SC$ and $\SD$ can easily be calculated. See e.g \cite{MPAT:08} and \cite{ZSN:09}.
\end{remark}

A geodesic in $M$ is a curve $t\mapsto c(t) $ satisfying for all $t$,  $\di \n_{\dot c(t)}^{\dot c(t)}\dot c=0$. It is well known that a geodesic has constant speed, and that it locally minimizes the distance (\cite{Chern-Shen:05}). If so, letting $\rho(x,y)$ the forward distance from $x$ to $y$, then 
\begin{equation}
\label{8}
\rho^2(x,y)={<}\dot c(0), \dot c(0){>}_{\dot c(0)}
\end{equation}
where $t\mapsto c(t)$ is the minimal geodesic satisfying $c(0)=x$ and $c(1)=y$. By definition, the backward distance from $x$ to $y$ is $\rho(y,x)$.

 For $x\in M$ and $v\in T_xM$, we let whenever it exists $\exp_x(v):=c(1)$ where $t\mapsto c(t)$ is the geodesic satisfying $\dot c(0)=v$.

If $M$ is complete, analytic, simply connected and has nonpositive flag curvature (we say that $M$ is an analytic Cartan-Hadamard manifold), then $\exp_x : T_xM\to M$ is an homeomorphism (\cite{Auslander:55} theorem 4.7).  Under these assumption, letting for $x,y\in M$,  $\overrightarrow{xy}=\exp_x^{-1}(y)$, we have 
\begin{equation}
\label{9}
\rho^2(x,y)={<}\overrightarrow{xy}, \overrightarrow{xy}{>}_{\overrightarrow{xy}}.
\end{equation}

For $x_0\in M$ and $R>0$,, let us denote by $B(x_0,R)$ (resp. $\bar B(x_0,R)$) the (forward) open (resp. closed) ball  with center $x_0$ and radius $R$: 
\begin{equation}
\label{41}
B(x_0,R)=\{y\in M, \ \rho(x_0,y)<R\}\quad (\hbox{resp.}\quad \bar B(x_0,R)=\{y\in M, \ \rho(x_0,y)\le R\})
\end{equation}

Now let $(t,s)\mapsto c(t,s)$ a family of minimizing geodesics $t\mapsto c(t,s)$, $t\in [0,1]$,  parametrized by $s \in I$, $I$ an interval in $\RR$. Define 
\begin{equation}
\label{10}
E(s)=\f12 \rho^2(c(0,s),c(1,s)).
\end{equation}
The computation of $E'(s)$ and $E''(s)$ is well-known, see e.g. \cite{Bao-Chern-Shen:00}. We have
\begin{equation}
\label{11}
E'(s)={<}\partial_s c(1,s), \partial_t c(1,s){>}_{\partial_t c(1,s)}-{<}\partial_s c(0,s), \partial_t c(0,s){>}_{\partial_t c(0,s)}.
\end{equation}
As for the second derivative, letting   $c=c(\cdot, 0)$, and for $X,Y$ vector fields along $c$
\begin{equation}
\label{12}
I(X,Y)=\int_0^1\left({<}\n_{T}^{T}X, \n_{T}^{T}Y{>}_{T}
-{<}R^T(X,T)T,Y{>}_{T}\right)\,dt
\end{equation}
the index of $X$ and $Y$, we have 
\begin{equation}
\label{13}
\begin{split}
E''(0)&={<}\n_{\partial_sc(1,0)}^{\partial_tc(1,0)}\partial_sc(1,\cdot), \partial_tc(1,0){>}_{\partial_tc(1,0)}-{<}\n_{\partial_sc(0,0)}^{\partial_tc(0,0)}\partial_sc(0,\cdot), \partial_tc(0,0){>}_{\partial_tc(0,0)}\\
&+I(\partial_sc(\cdot, 0),\partial_sc(\cdot, 0)).
\end{split}
\end{equation}
Assuming $s\mapsto c(0,s)$ and $s\mapsto c(1,s)$ are geodesics, we obtain
\begin{equation}
\label{42}
\begin{split}
E''(0)&=\ST_{\partial_tc(0,0)}(\partial_sc(0,0))-\ST_{\partial_tc(1,0)}(\partial_sc(1,0))+I(\partial_sc(\cdot, 0),\partial_sc(\cdot, 0)).
\end{split}
\end{equation}

We are interested in the situation where $c(1,s)\equiv z$ a constant. In this case we have
\begin{equation}
\label{14}
\begin{split}
E''(0)&=\ST_{\partial_tc(0,0)}(\partial_sc(0,0))+I(\partial_sc(\cdot, 0),\partial_sc(\cdot, 0)).
\end{split}
\end{equation}
For $p\ge 1$, define 
\begin{equation}
 \label{44}
D_p(s)=\rho^p(c(0,s), z)
\end{equation}

\begin{prop}
\label{P8}
Assume $\SK\le k$, $\ST\ge -\d$, $\SC\le C$ for some $k,\d \ge 0$, $C\ge 1$. Let $p> 1$.
Then writing $r=\rho(x,z)$,
\begin{equation}
 \label{73}
D_p''(0)\ge pr^{p-2}\left(\min\left(p-1,\f{\sqrt kr\cos (\sqrt kr)}{\sin (\sqrt kr)} \right)C^{-2}-\d r\right).
\end{equation}
 If $z$ and $x=c(0,0)$ satisfy $\rho(x,z)< R(p,k,\d, C)$ with 
\begin{equation}
\label{43}
 R(p,k,\d, C)=\min\left(\f{p-1}{C^2\d}, \f1{\sqrt{k}}\arctan\left(\f{\sqrt{k}}{C^2\d}\right)\right)
\end{equation}
and the injectivity radius at $x$ is strictly larger than $R(p,k,\d, C)$, 
 then $D_p''(0)> 0$.
\end{prop}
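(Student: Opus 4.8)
The plan is to deduce \eqref{73} from the second variation formula \eqref{14} together with Rauch/index-type comparison, and then to read off the last assertion by a direct computation. Since $c(1,s)\equiv z$, we have $D_p(s)=\rho^p(c(0,s),z)=(2E(s))^{p/2}$ with $E$ as in \eqref{10}, so differentiating twice and using $2E(0)=r^2$,
\[
D_p''(0)=p(p-2)\,r^{p-4}\,E'(0)^2+p\,r^{p-2}\,E''(0).
\]
Write $x=c(0,0)$, $T=\partial_tc(0,0)$, $W=\partial_sc(0,0)$; since $c(\cdot,0)$ joins $x$ to $z$ on $[0,1]$ we have $F(T)=r$, and we normalize the variation so that $F(W)=1$. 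By \eqref{11} and $\partial_sc(1,\cdot)\equiv0$ we get $E'(0)=-{<}W,T{>}_T$; put $a={<}W,T{>}_T$. By \eqref{14}, $E''(0)=\ST_T(W)+I(J,J)$, where $J=\partial_sc(\cdot,0)$ is the variation field, which is a Jacobi field along $c$ with $J(0)=W$, $J(1)=0$, and $I$ is the index form \eqref{12}. The hypothesis $\ST\ge-\d$ gives $\ST_T(W)\ge-\d\,rF(W)^2=-\d r$, so the whole problem reduces to a lower bound for $I(J,J)$ combined with the elementary algebra above.

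Next I would split $J=J^\top+J^\perp$ orthogonally with respect to $g_T$ along $c$, into components tangent and normal to $\dot c$. Along $c$ the Chern connection $\n^T$ is metric, because the correction term in \eqref{5} involves $\n_T^TT=0$; moreover the flag-curvature operator $X\mapsto R^T(X,T)T$ is $g_T$-self-adjoint and annihilates $T$. Hence $h(t):={<}J,T{>}_T$ satisfies $h''\equiv0$, so $h(t)=a(1-t)$ and $J^\top(t)=\dfrac{a(1-t)}{r^2}\,T$. A direct computation then yields $I(J^\top,J^\top)=a^2/r^2$, $I(J^\top,J^\perp)=0$, whence $I(J,J)=\dfrac{a^2}{r^2}+I(J^\perp,J^\perp)$. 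For the normal part, by the definition \eqref{7} of the flag curvature and $\SK\le k$ one has ${<}R^T(J^\perp,T)T,J^\perp{>}_T\le kr^2{<}J^\perp,J^\perp{>}_T$, which reduces $I(J^\perp,J^\perp)$ to the model index form $\tilde I(Y,Y)=\int_0^1\big({<}\n_T^TY,\n_T^TY{>}_T-kr^2{<}Y,Y{>}_T\big)\,dt$ of a space of constant curvature $k$. Provided $\sqrt kr\le\pi/2$ there are no conjugate points, and comparing $J^\perp=Y_0+Z$ with the model field $Y_0(t)=\dfrac{\sin(\sqrt kr(1-t))}{\sin(\sqrt kr)}\,P(t)$ ($P$ a $\n^T$-parallel normal field with $P(0)=W^\perp$, so $Z(0)=Z(1)=0$, $\tilde I(Y_0,Z)=0$, $\tilde I(Z,Z)\ge0$) gives
\[
I(J^\perp,J^\perp)\ \ge\ {<}W^\perp,W^\perp{>}_T\;\frac{\sqrt kr\cos(\sqrt kr)}{\sin(\sqrt kr)},
\]
the right-hand side being read as ${<}W^\perp,W^\perp{>}_T$ when $k=0$.

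Assembling the pieces, the term $p(p-2)r^{p-4}a^2$ (unfavourable for $1<p<2$) merges with $I(J^\top,J^\top)=a^2/r^2$ into $(p-1)a^2/r^2\ge0$; writing $\kappa=\frac{\sqrt kr\cos(\sqrt kr)}{\sin(\sqrt kr)}\le1$ and $b^2={<}W^\perp,W^\perp{>}_T$, and using $\dfrac{a^2}{r^2}+b^2={<}W,W{>}_T$ and $p-1,\kappa\ge0$,
\[
D_p''(0)\ \ge\ p\,r^{p-2}\Big((p-1)\tfrac{a^2}{r^2}+\kappa b^2-\d r\Big)\ \ge\ p\,r^{p-2}\big(\min(p-1,\kappa)\,{<}W,W{>}_T-\d r\big),
\]
and $\SC\le C$ gives ${<}W,W{>}_T\ge C^{-2}{<}W,W{>}_W=C^{-2}$, which is \eqref{73}. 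For the last assertion, the injectivity-radius hypothesis at $x$ (which exceeds $r$) ensures that $c$ and the nearby minimizing geodesics to $z$ form a smooth family with $c(1,s)\equiv z$, so that \eqref{14} and the estimates above apply; then $D_p''(0)>0$ as soon as $\min(p-1,\kappa)C^{-2}>\d r$, and this is exactly $r<R(p,k,\d,C)$: the inequality $p-1>C^2\d r$ means $r<\tfrac{p-1}{C^2\d}$, while $\kappa>C^2\d r$ is, since $\tan$ is increasing on $[0,\pi/2)$, equivalent to $\tan(\sqrt kr)<\tfrac{\sqrt k}{C^2\d}$, i.e.\ $r<\tfrac1{\sqrt k}\arctan\tfrac{\sqrt k}{C^2\d}$ (which also forces $\sqrt kr<\pi/2$).

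I expect the main obstacle to be carrying out the index comparison intrinsically in the Finsler setting, that is, checking that the classical Riemannian argument transfers along $c$ with the Chern connection: metricity of $\n^T$ restricted to $c$, self-adjointness and the kernel of the flag-curvature operator $X\mapsto R^T(X,T)T$, existence of a $\n^T$-parallel normal frame, and the absence of conjugate points when $\sqrt kr\le\pi/2$. The accompanying bookkeeping — noticing that the tangential index $a^2/r^2$ precisely absorbs $p(p-2)E'(0)^2$ for $1<p<2$, and that the normal index contributes the factor $\tfrac{\sqrt kr\cos(\sqrt kr)}{\sin(\sqrt kr)}$ exactly because $c$ reaches $z$ — is elementary once the comparison is in place.
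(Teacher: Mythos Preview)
Your proof is correct and follows essentially the same route as the paper: split the Jacobi field into tangential and normal parts with respect to $g_T$, compute the tangential index explicitly (yielding $a^2/r^2$, which combines with the $(p-2)$ term from the chain rule to give the factor $p-1$), bound the normal index from below by Rauch/index comparison under $\SK\le k$, control $\ST_T(W)\ge-\d r$, and finally use $\SC\le C$ to pass from ${<}W,W{>}_T$ to $C^{-2}$. The only difference is presentational: the paper cites Lemmas~9.5.1 and~7.3.2 of \cite{Bao-Chern-Shen:00} for the index/Rauch comparison in the Finsler setting, whereas you spell out the model field $Y_0$ and the decomposition $J^\perp=Y_0+Z$ by hand---which is exactly what those lemmas provide.
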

\begin{remark}
 Note if $p\ge 2$ then 
$$
R(p,k,\d, C)=R(2,k,\d, C)=\f1{\sqrt{k}}\arctan\left(\f{\sqrt{k}}{C^2\d}\right).
$$
\end{remark}

\begin{proof}
Define $T(t)=\partial_tc(t,0)$,  $J(t)=\partial_sc(t,0)$, $$ J^T(t)=\f1{F(T(t))^2}{<}J(t),T(t){>}_{T(t)}T(t),\quad J^N(t)=J(t)-J^T(t).$$
Using successively \cite{Bao-Chern-Shen:00} Lemma~9.5.1 which compares the index $I(J,J)$ with the one of its ``transplant'' into a manifold with constant curvature $k^2$ and the index lemma \cite{Bao-Chern-Shen:00} Lemma~7.3.2 which compares the index of the transplant to the one of the Jacobi field with same boundary values, we get, letting $r=\rho(x,z)=D_1(0)$, 
\begin{equation}
 \label{45}
I(J,J)\ge \f{\sqrt kr\cos (\sqrt kr)}{\sin (\sqrt kr)}{<}J^N(0),J^N(0){>}_{T(0)}+ {<}J^T(0),J^T(0){>}_{T(0)}.
\end{equation}
Using the expression~\eqref{14} for $E''(0)$ we obtain
\begin{equation}
 \label{47}
E''(0)\ge -\d r+ \f{\sqrt kr\cos (\sqrt kr)}{\sin (\sqrt kr)}{<}J^N(0),J^N(0){>}_{T(0)}+ {<}J^T(0),J^T(0){>}_{T(0)}.
\end{equation}
We have from~\eqref{11}
\begin{equation}
 \label{48}
E'(0)^2= r^2{<}J^T(0),J^T(0){>}_{T(0)}.
\end{equation}

Now from $D_1(s)=\sqrt{2E(s)}$ we get
\begin{equation}
\label{47bis}
D_1'(s)=\f{E'(s)}{D_1(s)},\quad D_1''(s)=\f{E''(s)}{D_1(s)}-\f{E'(s)^2}{D_1^3(s)},
\end{equation}
and this yields 
\begin{align*}
 &D_p''(0)\\&=pD_1(0)^{p-2}\left((p-1)D_1'(0)^2+D_1(0)D_1''(0)\right)\\
&=pr^{p-2}\left((p-2){<}J^T(0),J^T(0){>}_{T(0)}+E''(0)\right)\\
&\ge pr^{p-2}\left((p-1){<}J^T(0),J^T(0){>}_{T(0)}-\d r+ \f{\sqrt kr\cos (\sqrt kr)}{\sin (\sqrt kr)}{<}J^N(0),J^N(0){>}_{T(0)}\right)\\
&\ge pr^{p-2}\left(\min\left(p-1,\f{\sqrt kr\cos (\sqrt kr)}{\sin (\sqrt kr)} \right){<}J(0),J(0){>}_{T(0)}-\d r\right)\\
&\ge pr^{p-2}\left(\min\left(p-1,\f{\sqrt kr\cos (\sqrt kr)}{\sin (\sqrt kr)} \right)C^{-2}-\d r\right).
\end{align*}
>From this bound the rest of the proof follows easily.
\end{proof}

Similarly, we can obtain an upper bound for $D_p''(0)$:
\begin{prop}
 \label{P6}
Assume the sectional curvatures $\SK$ have a lower bound $-\b^2$ for some $\b>0$, and  $\ST \le \d'$ for some $\d'>0$, $\SD\le D$ for some $D\ge 1$. Again let $r=\rho(x,z)$, assume that the injectivity radius at $x$ is larger than~$r$. 
Then 
\begin{equation}
 \label{54}D_p''(0)\le 
pr^{p-2}\left(D^2\max\left(p-1,\b r\coth (\b r) \right)+\d' r\right).
\end{equation}
\end{prop}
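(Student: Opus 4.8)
The plan is to mirror the proof of Proposition~\ref{P8}, reversing every comparison inequality. Write $c=c(\cdot,0)$, set $T(t)=\partial_tc(t,0)$ and $J(t)=\partial_sc(t,0)$, and split $J=J^T+J^N$ into its ${<}\cdot,\cdot{>}_{T}$-tangential and normal parts along $c$; since $s\mapsto c(1,s)\equiv z$ one has $J(1)=0$, and as $D_p''(0)$ is quadratic in the variation field it suffices to treat $F(J(0))=1$. Exactly as in Proposition~\ref{P8}, \eqref{47bis} and \eqref{48} give
\[
D_p''(0)=pr^{p-2}\big((p-2){<}J^T(0),J^T(0){>}_{T(0)}+E''(0)\big),
\]
and \eqref{14} gives $E''(0)=\ST_{T(0)}(J(0))+I(J,J)$, so the whole problem reduces to bounding $I(J,J)$ and $\ST_{T(0)}(J(0))$ from above.

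The essential step is the upper bound on the index $I(J,J)$. Whereas Proposition~\ref{P8} first pushes $I(J,J)$ down to the constant-curvature-$k$ model using \cite{Bao-Chern-Shen:00} Lemma~9.5.1 and then invokes the index lemma \cite{Bao-Chern-Shen:00} Lemma~7.3.2 \emph{in the model}, here I would reverse the order. Let $W$ be the transplant along $c$ of the Jacobi field of the model of constant curvature $-\b^2$ with the same length $r$ and the same endpoint data $W^N(0)=J^N(0)$, $W^T(0)=J^T(0)$, $W(1)=0$. The index lemma \emph{in $M$} gives $I(J,J)\le I(W,W)$; and since $\SK\ge -\b^2$ makes the flag-curvature term $-{<}R^T(W^N,T)T,W^N{>}_{T}$ in the index form of $W$ no larger than its value in the model, Lemma~9.5.1 read in this direction bounds $I(W,W)$ by the index of that model Jacobi field. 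Computing the latter block-wise — tangential and ${<}\cdot,\cdot{>}_{T}$-normal parts decouple in constant curvature — one finds
\[
I(J,J)\le \b r\coth(\b r)\,{<}J^N(0),J^N(0){>}_{T(0)}+{<}J^T(0),J^T(0){>}_{T(0)}.
\]
The hypothesis that the injectivity radius at $x$ exceeds $r$ guarantees $c|_{[0,1]}$ has no conjugate point, so the index lemma in $M$ applies; and since the comparison model is nonpositively curved there is no competing upper restriction on $r$, which is why — unlike in \eqref{43} — no smallness condition on $r$ is required.

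It remains to assemble the estimate. From \eqref{40} and $\ST\le\d'$ we get $\ST_{T(0)}(J(0))\le\d'F(T(0))F(J(0))^2=\d' r$, hence, using ${<}J^T(0),J^T(0){>}_{T(0)}+{<}J^N(0),J^N(0){>}_{T(0)}={<}J(0),J(0){>}_{T(0)}$,
\[
(p-2){<}J^T(0),J^T(0){>}_{T(0)}+E''(0)\le \d' r+\max\!\big(p-1,\ \b r\coth(\b r)\big)\,{<}J(0),J(0){>}_{T(0)}.
\]
Finally \eqref{46} gives ${<}J(0),J(0){>}_{T(0)}\le D^2{<}J(0),J(0){>}_{J(0)}=D^2F(J(0))^2=D^2$, and substituting into the formula for $D_p''(0)$ above yields \eqref{54}. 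The delicate point is the reversal carried out in the middle paragraph: one must check that \cite{Bao-Chern-Shen:00} Lemma~9.5.1, together with its Finsler ``almost metric'' correction terms for the Chern connection, still applies when the \emph{lower} flag-curvature bound is used and the index lemma is invoked in $M$ rather than in the model — but this is the exact mirror image of the chain already run for Proposition~\ref{P8}, so I expect no new difficulty.
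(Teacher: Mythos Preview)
Your proof is correct and follows essentially the same route as the paper. The paper's proof differs only in presentation: rather than citing Lemma~9.5.1 in the reversed direction, it constructs the comparison field explicitly as $Y(t)=G(t)X(t)$ with $X$ the $\n^T$-parallel field along $c$ satisfying $X(0)=J^N(0)$ and $G(t)=\cosh(r\b t)-\coth(r\b)\sinh(r\b t)$, then applies the index lemma in $M$ to get $I(J^N,J^N)\le I(Y,Y)$ and computes $I(Y,Y)$ directly via integration by parts and the bound $-{<}R^T(X,T)T,X{>}_T\le r^2\b^2{<}J^N(0),J^N(0){>}_{T(0)}$; this yields exactly your inequality $I(J^N,J^N)\le r\b\coth(r\b){<}J^N(0),J^N(0){>}_{T(0)}$, and the remaining assembly is identical to yours.
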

\begin{proof}
We have by~\eqref{14} and~\eqref{47bis} together with the fact that 
\begin{equation}
 I(J,J)={<}J^T(0),J^T(0){>}+I(J^N,J^N),
\end{equation}
\begin{align*}
 D_p''(0)=pr^{p-2}\left(p-1){<}J^T(0),J^T(0){>}_{T(0)}+I(J^N,J^N)+\ST_T(J)\right).
\end{align*}
Let $t\mapsto X(t)$ the parallel vector field along $t\mapsto c(t,0)$ with initial condition $J^N(0)$, and for $t\in [0,1]$, let 
\begin{align*}
 G(t)=\cosh(r\b t)-\coth\left({r\b}\right)\sinh(r\b t).
\end{align*}
This is the solution of $G''=r\b G$ with conditions $G(0)=1$ and $G(1)=0$.
The vector field $t\mapsto Y(t)$ along $t\mapsto c(t,0)$ defined by 
\begin{equation}
 \label{56}
Y(t)=G(t)X(t)
\end{equation}
has same boundary values as $t\mapsto J^N(t)$, so by the index lemma \cite{Bao-Chern-Shen:00} Lemma~7.3.2 we have 
\begin{equation}
 \label{57}
I(J^N,J^N)\le I(Y,Y).
\end{equation}
On the other hand 
\begin{align*}
& I(Y,Y)\\&=\int_0^1\left(G'(t)^2{<}J^N(0),J^N(0){>}_{T(0)}-G(t)^2{<}R^T(X(t),T(t))T(t),X(t){>}_{T(t)}\right)\,dt\\
&\le {<}J^N(0),J^N(0){>}_{T(0)}\int_0^1\left(G'(t)^2+r^2\b^2G(t)^2\right)\,dt\\
&={<}J^N(0),J^N(0){>}_{T(0)}\left(\left[G'(t)G(t)\right]_0^1+\int_0^1G(t)\left(-G''(t)^2+r^2\b^2G(t)\right)\,dt\right)\\
&={<}J^N(0),J^N(0){>}_{T(0)}r\b\coth\left({r\b}\right).
\end{align*}
So 
\begin{equation}
 \label{71}
\begin{split}
&D_p''(0)\\
&\le pr^{p-2}\left((p-1){<}J^T(0),J^T(0){>}_{T(0)}+r\b\coth (r\b){<}J^N(0),J^N(0){>}_{T(0)}\right)+\d'r\\
&\le pr^{p-2}\left(\max \left((p-1),r\b\coth (r\b)\right){<}J(0),J(0){>}_{T(0)}+\d'r\right)\\
&\le pr^{p-2}\left(D^2\max \left((p-1),r\b\coth (r\b)\right)+\d'r\right)
\end{split}
\end{equation}
since $F(J(0))=1$. 

\end{proof}

\bigbreak

For $x\in M$, let $\ell_x : T_xM\to T_x^\ast M$ be the Legendre transformation, defined as 
\begin{equation}
\label{17}
\ell_x(V)=g_V(V,\cdot)\quad\hbox{if}\quad V\not=0,\quad \ell_x(0)=0.
\end{equation}
It is well-known that $\ell_x$ is a bijection.
The global Legendre transformation on $TM$ is defined as 
\begin{equation}
\label{18}
\SL(V)=\ell_{\pi(V)}(V)
\end{equation}
where $\pi : TM\to M$ is the canonical projection.
 If we define the dual Minkowski norm $F^\ast$ on $T_x^\ast M$ as 
\begin{equation}
\label{19}
F^\ast(\xi)=\max\{\xi(y),\ y\in T_xM,\ F(y)=1\},
\end{equation}
then 
\begin{equation}
\label{20}
F=F^\ast\circ \SL
\end{equation}
and for non zero $V\in TM$ and $\a\in T^\ast M$, 
\begin{equation}
\label{20bis}
\langle \SL(V),V\rangle=F(V)^2,\quad \quad
\langle \a, \SL^{-1}(\a)\rangle =F^\ast (\a)^2
\end{equation}
(see e.g. \cite{Alvarez_Paiva:06}).

For $f$ a $C^1$ function on $M$ we may define the gradient of $f$ 
\begin{equation}
\label{21}
\grad f=\SL^{-1}(df).
\end{equation}

\section{Forward $p$-means }\label{Section3}
\setcounter{equation}0

Let $\mu$ be a compactly supported probability measure in $M$. For $p>1$ and  $x\in M$ we define 
\begin{equation}
\label{16}
\SE_{\mu,p}(x)=\int_M\rho^p(x,z)\,\mu(dz).
\end{equation}
The (forward) $p$-mean of $\mu$ is the point~$e_p$ of $M$ where $\SE_{\mu,p}$ reaches its minimum whenever it exists and is unique.  

In this paper we will consider forward $p$-means and we will call them $p$-means. Similarly we could define the backward $p$-mean $\overleftarrow e_p$ as the point which minimizes
$$
x\mapsto \overleftarrow\SE_{\mu,p}(x):=\int_M\rho^p(z,x)\,\mu(dz).
$$
 Depending on the context, forward or backward mean is more appropriate. One should note that defining the reverse (or adjoint) Finsler  structure  $\overleftarrow F (v)=F(-v)$, $v\in TM$, it is easy to check that the associated distance $\overleftarrow \rho$ satisfies $\overleftarrow \rho(z,x)=\rho(x,z)$, and forward $p$-mean  for $\overleftarrow F$ is backward $p$-mean for $F$. So without loss
of generality we can consider only the forward $p$-means.

 One should also note that in High Angular Resolution Imaging the Finsler structure is symmetric, so both notions coincide. It is not the case for the application concerning active contours where it is natural to consider non symmetric $F$. 

Even if it is in a non-Finslerian context, one can give the example of right-sided and left-sided Kullback-Leibler divergences
 for families of Gaussian probability densities (see \cite{Nielsen-Nock:09}). 
The left sided centroid focuses on the highest mode (it is zero-forcing), and
 the right-sided centroid tries to cover the support of both normals (it is zero-avoiding as depicted in Fig.2 of~\cite{Nielsen-Nock:09}).

\begin{prop}
\label{P1}
Assume there exists $C>0$ such that $\SC(x)\le C$ for all $x\in M$, where $\SC(x)$ is defined in~\eqref{46}. Assume furthermore that 
 ${\rm supp}(\mu)\subset B(x_0,R)$ for some $x_0\in M$ and $R>0$. 
Then $x\mapsto \SE_{\mu,p}(x)$ has at least one global minimum in $\bar B(x_0, C(1+C)R)$.
\end{prop}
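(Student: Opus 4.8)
The plan is to exhibit a bounded closed set on which $\SE_{\mu,p}$ is finite and continuous, and which contains a global minimizer of $\SE_{\mu,p}$ over all of $M$; then invoke compactness. First I would record two elementary facts about the quantity $\SC$. From the definition \eqref{46}, for any $x$ and any non-zero $v,w\in T_xM$ we have $\sqrt{{<}v,v{>}_v}\le \SC(x)\sqrt{{<}v,v{>}_w}\le C\sqrt{{<}v,v{>}_w}$, and by symmetry of the roles this lets one compare the forward distance $\rho(x,y)$ with the backward distance $\rho(y,x)$: along a minimal geodesic from $x$ to $y$ one estimates $\rho(y,x)\le C\,\rho(x,y)$ (and hence also $\rho(x,y)\le C\,\rho(y,x)$). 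Thus $\rho$ is ``$C$-quasi-symmetric''. Combined with the triangle inequality for the forward distance, this is the only geometric input needed.

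The core estimate is then the following. Fix any competitor point $y\notin \bar B(x_0,C(1+C)R)$, i.e.\ $\rho(x_0,y)>C(1+C)R$. For $z\in{\rm supp}(\mu)\subset B(x_0,R)$ we have $\rho(x_0,z)<R$, hence $\rho(z,x_0)\le C\rho(x_0,z)<CR$ by quasi-symmetry, and therefore by the triangle inequality $\rho(z,y)\ge \rho(z,x_0)^{-1}$... more precisely $\rho(z,y)\ge \rho(x_0,y)-\rho(x_0,z)$ is the wrong direction, so instead use $\rho(x_0,y)\le \rho(x_0,z)+\rho(z,y)$... hmm, this bounds $\rho(z,y)$ from below by $\rho(x_0,y)-\rho(x_0,z)> C(1+C)R-R$. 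On the other hand, testing $\SE_{\mu,p}$ at $x_0$ gives $\SE_{\mu,p}(x_0)=\int\rho^p(x_0,z)\,\mu(dz)<R^p$. So it suffices to arrange that for every such $y$,
\[
\int_M\rho^p(y,z)\,\mu(dz)\ge \bigl(C(1+C)R-R\bigr)^p = (C^2+C-1)^pR^p \ge R^p>\SE_{\mu,p}(x_0),
\]
which holds because $C\ge 1$ forces $C^2+C-1\ge 1$. [One must double-check the direction of the triangle-inequality step: we want a \emph{lower} bound on $\rho(z,y)$ or on $\rho(y,z)$. Using quasi-symmetry one more time, $\rho(y,z)\ge C^{-1}\rho(z,y)\ge C^{-1}(\rho(x_0,y)-\rho(x_0,z))$; absorbing the factor $C^{-1}$ is exactly why the radius is taken to be $C(1+C)R$ rather than $(1+C)R$. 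So $\rho(y,z)> C^{-1}(C(1+C)R-R) = ((C+1)-C^{-1})R\ge R$.] Hence $\SE_{\mu,p}(y)> R^p>\SE_{\mu,p}(x_0)$ for all $y$ outside the ball, so any global minimum — if one exists — lies inside $\bar B(x_0,C(1+C)R)$, and it is enough to minimize the restriction of $\SE_{\mu,p}$ to that closed ball.

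To finish, I would observe that $\bar B(x_0,C(1+C)R)$ is compact: $M$ being a smooth manifold with a Finsler metric, forward closed balls of finite radius whose interior is relatively compact are compact — one should state the standing completeness hypothesis (or that $\mu$ is compactly supported and the relevant ball is contained in a compact set) to justify this, mirroring the ``complete'' assumptions used elsewhere in the paper. Finally, $x\mapsto\SE_{\mu,p}(x)=\int\rho^p(x,z)\,\mu(dz)$ is continuous: $z\mapsto \rho(x,z)$ is continuous, $x\mapsto\rho(x,z)$ is continuous, the integrand is dominated on the compact ball times ${\rm supp}(\mu)$ by a constant, so dominated convergence applies. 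A continuous function on a compact set attains its minimum, which gives the claimed global minimizer in $\bar B(x_0,C(1+C)R)$.

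The main obstacle is bookkeeping with the asymmetry: one must be careful each time the triangle inequality or quasi-symmetry is applied to keep track of whether a forward or backward distance appears, and this is precisely what dictates the constant $C(1+C)$ in the statement. The compactness of the forward ball is a mild point that should be addressed explicitly, since in a general (possibly incomplete or non-symmetric) Finsler manifold forward balls need not be compact; here it is covered by the compact-support hypothesis on $\mu$ together with the implicit completeness that the rest of the paper assumes.
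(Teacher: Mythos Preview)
Your argument is correct and follows essentially the same route as the paper: establish the quasi-symmetry $\rho(y_1,y_2)\le C\rho(y_2,y_1)$ from the bound on $\SC$, then show that any point outside $\bar B(x_0,C(1+C)R)$ has energy at least $R^p\ge \SE_{\mu,p}(x_0)$, forcing the minimum into the closed ball. The only cosmetic difference is in the triangle-inequality bookkeeping: the paper writes $\rho(x,y)\ge \rho(x,x_0)-\rho(y,x_0)\ge C^{-1}\rho(x_0,x)-C\rho(x_0,y)>(1+C)R-CR=R$, applying quasi-symmetry twice, whereas you bound $\rho(z,y)$ first and then apply quasi-symmetry once to pass to $\rho(y,z)$; both land on the same inequality $\rho(y,z)>R$. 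Your explicit discussion of compactness of the closed ball and continuity of $\SE_{\mu,p}$ is more careful than the paper, which simply writes ``from this we get the conclusion.''
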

\begin{proof}
 We begin with establishing that for all $y_1,y_2\in M$, 
\begin{equation}
 \label{49}
\f1C\rho(y_2,y_1)\le \rho(y_1,y_2)\le C\rho(y_2,y_1).
\end{equation}
It is sufficient to establish the second inequality and then to exchange $y_1$ and $y_2$. If $t\mapsto \varphi(t)$ is a path from $y_1=\varphi(0)$ and $y_2=\varphi(1)$ then its length  $L(\varphi)$ satisfies 
\begin{align*}
L(\varphi)&=\int_0^1 \sqrt{{<}\dot\varphi(t),\dot\varphi(t){>}_{\dot\varphi(t)}}\,dt\\
&= \int_0^1 \sqrt{\f{{<}-\dot\varphi(t),-\dot\varphi(t){>}_{\dot\varphi(t)}}{{<}-\dot\varphi(t),-\dot\varphi(t){>}_{-\dot\varphi(t)}}}\sqrt{{<}-\dot\varphi(t),-\dot\varphi(t){>}_{-\dot\varphi(t)}}\,dt\\
&\le \int_0^1 \SC(\varphi(t))\sqrt{{<}-\dot\varphi(t),-\dot\varphi(t){>}_{-\dot\varphi(t)}}\,dt\\
&\le C\int_0^1 \sqrt{{<}-\dot\varphi(t),-\dot\varphi(t){>}_{-\dot\varphi(t)}}\,dt\\
&=CL(\hat \varphi)
\end{align*}
where $\hat\varphi$ is the path from $y_2 $ to $y_1$ defined by $\hat\varphi(t)=\varphi(1-t)$. Minimizing over all paths $\hat\varphi$ from $y_2$ to $y_1$ we get 
\begin{equation}
 \label{50}
\rho(y_1,y_2)\le C\rho(y_2,y_1).
\end{equation}

Now if ${\rm supp}(\mu)\subset B(x_0,R)$ then  $\SE_{\mu,p}(x_0)\le R^p$. On the other hand, if $x\not\in \bar B(x_0,C(1+C)R)$ then for all $y\in B(x_0,R)$
\begin{align*}
\rho(x,y)&\ge  \rho(x,x_0)-\rho(y,x_0)\\
&\ge \f1{C}\rho(x_0,x)-C\rho(x_0,y)\\
&\ge (1+C)R-CR=R
\end{align*}
and this clearly implies that $\SE_{\mu,p}(x)\ge R^p$. From this we get the conclusion.

\end{proof}

Concerning the uniqueness of the global minimum of $\SE_{\mu,p}$, we also have the following easy result.

\begin{prop}
\label{P2}
Assume that $\mu$ is supported by a compact ball $\bar B(x_0,R)$, and  that for all $z\in\bar B(x_0,R)$, the function $x\mapsto \rho^p(x,z)$  is strictly convex in $\bar B(x_0,C(1+C)R)$. Then $\mu$ has a unique  $p$-mean in $\bar B(x_0,C(1+C)R)$.
\end{prop}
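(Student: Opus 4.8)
The plan is to combine the existence statement of Proposition~\ref{P1} with the elementary fact that a strictly convex function has at most one minimizer, once one checks that strict convexity of the integrands $x\mapsto\rho^p(x,z)$ passes to the averaged functional $\SE_{\mu,p}$.

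First I would invoke Proposition~\ref{P1}: since $\SC(\cdot)\le C$ everywhere and ${\rm supp}(\mu)\subset\bar B(x_0,R)$, the function $\SE_{\mu,p}$ has at least one global minimum, attained at some point $e_p\in\bar B(x_0,C(1+C)R)$; write $m$ for the minimal value. Next I would record that $\SE_{\mu,p}$ is strictly convex on $\bar B(x_0,C(1+C)R)$: if $t\mapsto c(t)$, $t\in[0,1]$, is a minimal geodesic with image in $\bar B(x_0,C(1+C)R)$, then for each fixed $z\in{\rm supp}(\mu)$ the map $t\mapsto\rho^p(c(t),z)$ is strictly convex by hypothesis, and integrating this strict inequality against $\mu$ — which keeps it strict because $\mu$ is a probability measure — shows that $t\mapsto\SE_{\mu,p}(c(t))=\int_M\rho^p(c(t),z)\,\mu(dz)$ is strictly convex.

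Then, for uniqueness, I would argue by contradiction. Suppose $e$ and $e'$ are two global minimizers of $\SE_{\mu,p}$ lying in $\bar B(x_0,C(1+C)R)$, and let $c\colon[0,1]\to M$ be a minimal geodesic from $e$ to $e'$. Since $c$ stays in $\bar B(x_0,C(1+C)R)$, the function $h(t)=\SE_{\mu,p}(c(t))$ is strictly convex with $h(0)=h(1)=m$; hence $h(\frac12)<\frac12 h(0)+\frac12 h(1)=m$, contradicting the fact that $m$ is the minimum of $\SE_{\mu,p}$ over $M$. Therefore $e=e'$, which is the asserted uniqueness.

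The only step that is not pure bookkeeping — and hence the place I expect to need care — is the assertion that a minimal geodesic joining two minimizers stays inside $\bar B(x_0,C(1+C)R)$, so that the strict convexity hypothesis is legitimately applicable along $c$. This is a geodesic convexity property of the ball, already implicit in the phrase ``strictly convex in $\bar B(x_0,C(1+C)R)$''; under the standing assumptions it must be supplied by the underlying geometry (the curvature and injectivity-radius conditions behind the second-variation estimates of Propositions~\ref{P8} and~\ref{P6}). Granting it, the argument above is complete.
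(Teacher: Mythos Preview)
Your argument is correct and follows exactly the same route as the paper: strict convexity of each integrand $x\mapsto\rho^p(x,z)$ passes to $\SE_{\mu,p}$, and a strictly convex function has at most one minimizer, existence being supplied by Proposition~\ref{P1}. The paper's own proof is in fact a two-line version of what you wrote, and the geodesic-convexity-of-the-ball issue you flag at the end is left equally implicit there.
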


\begin{proof}
If $x\mapsto\rho^p(x,z)$ is strictly convex for all $z$ in the support of $\mu$ then $\SE_{\mu,p}$ is strictly convex, and this implies that it has a unique minimum, which is attained at a unique point $e_p$.
\end{proof}

\begin{cor}
\label{C1}
Assume $\SK\le k$, $\ST\ge -\d$, $\SC\le C$ for some $k,\d \ge 0$, $C\ge 1$. Let $p>1$. Again let 
$$
R(p,k,\d,C)=\min\left(\f{p-1}{C^2\d}, \f1{\sqrt{k}}\arctan\left(\f{\sqrt{k}}{C^2\d}\right)\right)
$$
If $\mu$ is supported by a  geodesic ball $B(x_0,R)$ with 
\begin{equation}
\label{51}
 R\le \f1{C(C+1)^2}R(p,k,\d,C)
\end{equation}
and the injectivity radius at any $x\in B(x_0,C(1+C)R)$ is strictly larger than $R(p,k,\d,C)$
then $\mu$ has a unique $p$-mean 
$e_p$ satisfying 
\begin{equation}
 \label{52}
e_p\in \bar B\left(x_0,\f1{C+1}R(p,k,\d,C)\right).
\end{equation}

\end{cor}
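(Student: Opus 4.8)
The plan is to deduce Corollary~\ref{C1} from Proposition~\ref{P2} and Proposition~\ref{P8}. The first task is to show that under the stated hypotheses the function $x\mapsto\rho^p(x,z)$ is strictly convex on $\bar B(x_0,C(1+C)R)$ for every $z\in B(x_0,R)$; then Proposition~\ref{P2} immediately gives existence and uniqueness of the $p$-mean $e_p$ in $\bar B(x_0,C(1+C)R)$, and it remains only to sharpen the localization to the smaller ball in~\eqref{52}.

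For the convexity step, fix $z\in B(x_0,R)$ and an arbitrary $x\in\bar B(x_0,C(1+C)R)$; convexity of $\rho^p(\cdot,z)$ along a geodesic through $x$ is exactly the statement $D_p''(0)>0$ in the notation of Proposition~\ref{P8}, applied with $c(0,0)=x$. To invoke that proposition I need two things: that the injectivity radius at $x$ exceeds $R(p,k,\d,C)$, which is assumed outright for all $x\in B(x_0,C(1+C)R)$ (and the boundary case $x\in\bar B\setminus B$ is handled by a limiting/closedness argument, or by enlarging $R$ slightly), and that $\rho(x,z)<R(p,k,\d,C)$. The latter is where the peculiar constant $\f1{C(C+1)^2}$ in~\eqref{51} enters: using the comparison~\eqref{49} between forward and backward distances together with the triangle inequality as in the proof of Proposition~\ref{P1}, one estimates $\rho(x,z)\le\rho(x,x_0)+\rho(x_0,z)\le C\rho(x_0,x)+C R$; since $x\in\bar B(x_0,C(1+C)R)$ one gets a bound of the form $\rho(x,z)\le C\cdot C(1+C)R+CR = C(C(1+C)+1)R \le C(C+1)^2 R$, and the hypothesis~\eqref{51} forces this to be $\le R(p,k,\d,C)$. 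Hence Proposition~\ref{P8} applies and $D_p''(0)>0$, giving the required strict convexity, and Proposition~\ref{P2} yields a unique $p$-mean $e_p$ in $\bar B(x_0,C(1+C)R)$.

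For the refined localization~\eqref{52}, I would argue by comparing the value of $\SE_{\mu,p}$ at $e_p$ with its value at $x_0$, exactly in the spirit of Proposition~\ref{P1} but now with the sharper radius available. Since $\mathrm{supp}(\mu)\subset B(x_0,R)$ we have $\SE_{\mu,p}(x_0)\le R^p$. If $e_p\notin\bar B(x_0,\f1{C+1}R(p,k,\d,C))$, then for every $y\in B(x_0,R)$ the triangle inequality and~\eqref{49} give $\rho(e_p,y)\ge\rho(e_p,x_0)-\rho(y,x_0)\ge\f1C\rho(x_0,e_p)-C\rho(x_0,y)>\f1{C(C+1)}R(p,k,\d,C)-CR$, which by~\eqref{51} (since $CR\le\f1{(C+1)^2}R(p,k,\d,C)$, so $C(C+1)\cdot CR\le\f{C}{C+1}R(p,k,\d,C)\le\ldots$; the arithmetic is routine) exceeds $R$, whence $\SE_{\mu,p}(e_p)\ge R^p\ge\SE_{\mu,p}(x_0)$, contradicting minimality of $e_p$ unless equality holds everywhere, which the strict convexity already rules out. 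Therefore $e_p\in\bar B(x_0,\f1{C+1}R(p,k,\d,C))$.

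The main obstacle is bookkeeping the three nested radii — $R$, $C(1+C)R$, and $R(p,k,\d,C)$ — so that the forward/backward distance distortion factor $C$ is applied the right number of times and the constant $\f1{C(C+1)^2}$ in~\eqref{51} comes out exactly; there is no deep new idea beyond combining the earlier propositions, but the chain of triangle-inequality-plus-\eqref{49} estimates must be assembled carefully, and one must be slightly attentive to the closed-versus-open ball and the strict-inequality requirement $\rho(x,z)<R(p,k,\d,C)$ rather than $\le$.
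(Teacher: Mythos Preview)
Your proposal is correct and follows the paper's route: bound $\rho(x,z)\le C(C+1)^2R\le R(p,k,\d,C)$ for $x\in\bar B(x_0,C(1+C)R)$ and $z$ in the support, invoke Proposition~\ref{P8} for strict convexity of $\rho^p(\cdot,z)$, then Proposition~\ref{P2} for existence and uniqueness of $e_p$ in $\bar B(x_0,C(1+C)R)$. Your contradiction argument for the refined localization~\eqref{52} is unnecessary, however: once $e_p\in\bar B(x_0,C(1+C)R)$ is established, hypothesis~\eqref{51} gives directly $C(1+C)R\le\frac{1}{C+1}R(p,k,\d,C)$, so~\eqref{52} is just an inclusion of balls (this is all the paper does, implicitly).
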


\begin{proof}
If $x,z\in B(x_0, C(1+C)R)$ then $$\rho(x,z)\le \rho(x,x_0)+\rho(x_0,z)\le (1+C)^2CR\le R(p,k,\d,C). $$
Using proposition~\ref{P8}, we obtain that $\SE_{\mu,p}$ is strictly convex on $B(x_0, C(1+C)R)$. So by proposition~\ref{P2} $\mu$ has a unique $p$-mean in $\bar B(x_0, C(1+C)R)$.
\end{proof}
\begin{remark}
 \label{R1}
Letting $x_0\in M$, $D$ be a relatively compact neighborhood of $x_0$, then $\SK$ and $\SC$ are bounded above  on $D$ by, say $k_D$ and $C_D$, and $\ST$ is bounded below on $D$ by $-\d_D$. Using these bounds instead of $k$, $C$ and $\d$, we can find $R$ sufficiently small so that the conditions of corollary~\ref{C1} are fulfilled. So we can say any measure $\mu$ with sufficiently small support has a unique $p$-mean.
\end{remark}
\begin{remark}
 \label{R3}
If $M$ is a Cartan-Hadamard manifold, we recover the fact that we can take $R(p,k,\d,C)$ as large as we want.

More generally, in the Riemannian case,  Afsari~\cite{Afsari:10} proved existence and uniqueness of $p$-means, $p\ge 1$ on geodesic balls with radius $\di r<\f12\min\left\{{\rm inj}(M),\f{\pi}{2\sqrt{k}}\right\}$ if $p\in [1,2)$, and $\di r<\f12\min\left\{{\rm inj}(M),\f{\pi}{\sqrt{k}}\right\}$ if $p\ge 2$. Even taking $\d=0$ and $C=1$ in Corollary~\ref{C1} the support of $\mu$ has half the size of the one in~\cite{Afsari:10}  for $p\in (1,2)$ due to the fact that  we have an additional condition~\eqref{51} coming from the non optimality of Proposition~\ref{P2} in the Riemannian context. As for $p\ge 2$ another factor two is gained in~\cite{Afsari:10} with repeated use of Toponogov and Alexandroff theorems which are not available in our context.
\end{remark}
\begin{remark}
 \label{R3bis}
The condition on injectivity radius is the same as in the Riemannian case. The cut locus of any  point of $x\in B(x_0,C(1+C)R)$ has to be at distance larger than $R(p,k,\d,C)$. As for Riemannian manifold there is no general condition which insures this property for cut points, but for conjugate points the same condition holds, due to Rauch comparison theorem, see Theorem~9.6.1 in~\cite{Bao-Chern-Shen:00}. In the particular case when $M$ is a Cartan-Hadamard Finsler manifold, i.e. it has nonpositive flag curvature and it is simply connected, then the injectivity radius in everywhere infinite (see Theorem~9.4.1 in~\cite{Bao-Chern-Shen:00}).
\end{remark}

\begin{prop}
\label{P3}
Let  $a\mapsto x(a)$ solve the equation 
\begin{equation}
\label{22}
x(0)=x_0\quad \hbox{and for $a\ge 0$}\quad x'(a)=\grad_{x(a)}(-\SE_{\mu,p}).
\end{equation}
Under the conditions of Corollary~\ref{C1}, the path $a\mapsto x(a)$ converges as $a\to \infty$ to the $p$-mean of~$\mu$.
\end{prop}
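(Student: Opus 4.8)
The plan is to exploit the strict convexity of $\SE_{\mu,p}$ on the closed ball $K:=\bar B(x_0,C(1+C)R)$ established (via Corollary~\ref{C1} and Proposition~\ref{P8}) to show that the gradient flow stays trapped in a compact convex region and descends toward the unique minimizer $e_p$. First I would check that the flow is well-defined and forward-complete: since $\mu$ has compact support and the Finsler structure is smooth, $\SE_{\mu,p}$ is $C^1$ (indeed $C^2$ away from the cut loci involved, which we have excluded by the injectivity radius hypothesis), so $\grad(-\SE_{\mu,p})$ is a locally Lipschitz vector field and the ODE~\eqref{22} has a solution on a maximal interval $[0,A)$. Along the flow, $\frac{d}{da}\SE_{\mu,p}(x(a)) = \langle d\SE_{\mu,p}, x'(a)\rangle = -\langle d\SE_{\mu,p},\SL^{-1}(d\SE_{\mu,p})\rangle = -F^\ast(d\SE_{\mu,p}(x(a)))^2 \le 0$ by~\eqref{20bis}, so $\SE_{\mu,p}$ is nonincreasing along the path.

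The key geometric step is confinement. I would argue that the sublevel set $\{\SE_{\mu,p}\le \SE_{\mu,p}(x_0)\}$ is contained in $\bar B(x_0,C(1+C)R)$: this is exactly the computation in the proof of Proposition~\ref{P1}, which shows $\SE_{\mu,p}(x)\ge R^p \ge \SE_{\mu,p}(x_0)$ for $x\notin \bar B(x_0,C(1+C)R)$ — in fact one gets a strict inequality once $\rho(x_0,x)$ exceeds $C(1+C)R$ by any positive amount, so the flow cannot escape $K$, and since $K$ is compact the maximal interval is $[0,\infty)$. Hence $a\mapsto \SE_{\mu,p}(x(a))$ is nonincreasing and bounded below, so it converges to some value $\ell \ge \SE_{\mu,p}(e_p)$, and $\int_0^\infty F^\ast(d\SE_{\mu,p}(x(a)))^2\,da < \infty$.

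Finally I would upgrade convergence of the energy to convergence of the point. Because $\SE_{\mu,p}$ is strictly convex on the compact convex set $K$, its unique critical point in $K$ is the minimizer $e_p$, and for every $\e>0$ there is $\eta(\e)>0$ with $F^\ast(d\SE_{\mu,p})\ge \eta(\e)$ on $K\setminus B(e_p,\e)$ (a critical point being the only place the positive continuous function $F^\ast\circ d\SE_{\mu,p}$ vanishes on compact $K$). Combined with $\int_0^\infty F^\ast(d\SE_{\mu,p}(x(a)))^2\,da<\infty$, the trajectory must enter every neighborhood $B(e_p,\e)$; a standard Lyapunov argument — using that $\SE_{\mu,p}(x(a))\downarrow$ and that on $K$ the strict convexity gives a lower bound $\SE_{\mu,p}(x)-\SE_{\mu,p}(e_p)\ge \kappa(\e)>0$ outside $B(e_p,\e)$ — then shows the trajectory cannot leave $B(e_p,\e)$ once it has entered the smaller sublevel set, so $x(a)\to e_p$ as $a\to\infty$. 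I expect the main obstacle to be the last step: namely, making rigorous that a gradient \emph{flow line} (not merely a minimizing sequence) converges to the minimizer, which requires care with the Finsler gradient — one should either invoke convexity of $a\mapsto \SE_{\mu,p}(x(a))$ along flow lines, or use the Lojasiewicz-type estimate furnished by strict convexity of $\SE_{\mu,p}$ on $K$, to rule out the trajectory lingering near a non-minimizing plateau; the asymmetry of $F$ (so that $F^\ast$ rather than a symmetric norm controls the gradient) is the technical wrinkle to watch.
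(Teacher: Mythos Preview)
Your proposal is correct and follows essentially the same approach as the paper: compute that $\SE_{\mu,p}$ is nonincreasing along the flow via $\frac{d}{da}\SE_{\mu,p}(x(a))=-F^\ast(d\SE_{\mu,p})^2$, use the estimate from Proposition~\ref{P1} to confine the trajectory to $\bar B(x_0,C(1+C)R)$, and then invoke strict convexity to conclude that the only possible limit point is $e_p$. If anything, you supply more detail than the paper does---the paper simply asserts that ``since $f(a)$ converges, any limit point is a critical point'' and stops, whereas you spell out the Lyapunov/sublevel-set argument; your anticipated obstacle in the last step is therefore not a gap in your reasoning but rather a place where you are being more careful than the original.
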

\begin{proof}
If $f(a)=(-\SE_\mu)(x(a))$ we have as soon as $\grad_{x(a)}(-\SE_{\mu,p})\not=0$, 
\begin{align*}
f'(a)&=\left\langle d_{x(a)}(-\SE_{\mu,p}), x'(a)\right\rangle\\
&=\left\langle d_{x(a)}(-\SE_{\mu,p}), \grad_{x(a)}(-\SE_{\mu,p})\right\rangle\\
&=\left\langle d_{x(a)}(-\SE_{\mu,p}), \SL^{-1}(d_{x(a)}(-\SE_{\mu,p}))\right\rangle\\
&=F^\ast\left(d_{x(a)}(-\SE_{\mu,p})\right)^2
\end{align*}
by \eqref{20} and \eqref{20bis}.

On the other hand, we have $f(0)\ge -R^p$ and $f$ is nondecreasing. This implies that for all $a\ge 0$, $x(a)\in \bar B(x_0,C(1+C)R)$, since for all $x\not\in \bar B(x_0,C(1+C)R)$, $\SE_{\mu,p}(x)\ge R^p$. As a consequence $x(a)$ has limit points as~$a$ goes to infinity, and since $f(a)$ converges, any limit point is a critical point of $x\mapsto \SE_{\mu,p}(x)$. But by Proposition~\ref{P2} $\SE_{\mu,p}$ has a unique critical point in $\bar B(x_0,C(1+C)R)$ which is the mean~$e_p$ of~$\mu$. So we can conclude that $x(a)$ converges to~$e_p$.
\end{proof}

\section{Forward median }\label{Section4}
\setcounter{equation}0

Let $\mu$ be a compactly supported probability measure in $M$. For $x\in M$ we define 
\begin{equation}
\label{16bis}
\SF_\mu(x)=\int_M\rho(x,z)\,\mu(dz).
\end{equation}
The  median of $\mu$ is the point in $M$ where $\SF_\mu$ reaches its minimum whenever it exists and is unique.  

Again we have the following result.
\begin{prop}
\label{P4}
Assume there exists $C>0$ such that $\SC(x)\le C$ for all $x\in M$. Assume furthermore that 
 ${\rm supp}(\mu)\subset B(x_0,R)$ for some $x_0\in M$ and $R>0$. 
Then $x\mapsto \SF_\mu(x)$ has at least one global minimum in $\bar B(x_0, C(1+C)R)$.
\end{prop}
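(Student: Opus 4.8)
The plan is to repeat verbatim the argument of Proposition~\ref{P1}, specialized to $p=1$. First I would recall from the proof of Proposition~\ref{P1} the two-sided comparison of forward and backward distances that the hypothesis $\SC(x)\le C$ for all $x\in M$ produces:
$$
\f1C\,\rho(y_2,y_1)\le\rho(y_1,y_2)\le C\,\rho(y_2,y_1),\qquad y_1,y_2\in M.
$$
Its derivation only compares the length of a path to that of its reversal and then minimizes over all paths, so nothing specific to the exponent is involved, and it applies unchanged here.

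Next I would bound $\SF_\mu$ from above and below exactly as for $\SE_{\mu,p}$. Since $\mathrm{supp}(\mu)\subset B(x_0,R)$ we have $\SF_\mu(x_0)=\int_M\rho(x_0,z)\,\mu(dz)\le R$. Conversely, for $x\notin\bar B\bigl(x_0,C(1+C)R\bigr)$ and $y\in B(x_0,R)$, the forward triangle inequality $\rho(x,x_0)\le\rho(x,y)+\rho(y,x_0)$ gives $\rho(x,y)\ge\rho(x,x_0)-\rho(y,x_0)$, and combining with $\rho(x,x_0)\ge\f1C\rho(x_0,x)>(1+C)R$ and $\rho(y,x_0)\le C\rho(x_0,y)<CR$ we get $\rho(x,y)>R$; integrating against $\mu$ yields $\SF_\mu(x)\ge R\ge\SF_\mu(x_0)$ for every such $x$. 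Hence $\inf_M\SF_\mu=\inf_{\bar B(x_0,C(1+C)R)}\SF_\mu$.

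Finally I would note that $\SF_\mu$ is continuous (indeed $\rho$ is continuous on $M\times M$ and $\mathrm{supp}(\mu)$ is compact, so $x\mapsto\int_M\rho(x,z)\,\mu(dz)$ is continuous by dominated convergence), so, exactly as in Proposition~\ref{P1}, the infimum over $\bar B(x_0,C(1+C)R)$ is attained, which gives the stated global minimum. I do not expect any genuine obstacle: beyond the triangle inequality and the constant $C$, the only point needing a word is the continuity (equivalently, lower semicontinuity) of $\SF_\mu$, and this is immediate from the compactness of the support of $\mu$.
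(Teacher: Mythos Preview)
Your proposal is correct and follows exactly the approach the paper intends: Proposition~\ref{P4} is stated without proof precisely because it is the $p=1$ instance of Proposition~\ref{P1}, and your argument reproduces that proof line by line with $\rho$ in place of $\rho^p$. The only addition you make is the explicit remark on continuity of $\SF_\mu$, which the paper leaves implicit in both places.
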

\begin{prop}
\label{P5}
Assume that $\mu$ is supported by a compact  ball $\bar B(x_0,R)$, that the support of $\mu$ is not contained in a single geodesic and  that for all $z\in\bar B(x_0,R)$, the forward distance to $z$ is convex, and strictly convex in any geodesic of $\bar B(x_0,C(1+C)R)$ which does not contain $z$. Then $\mu$ has a unique median $m\in\bar B(x_0,C(1+C)R)$.
\end{prop}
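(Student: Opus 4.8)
The plan is to follow the proof of Proposition~\ref{P2}, the only new difficulty being that the integrands $x\mapsto\rho(x,z)$ are merely convex, so strict convexity of $\SF_\mu$ has to be recovered from the hypotheses by a measure-theoretic argument. Existence of a global minimum $m\in\bar B(x_0,C(1+C)R)$ is furnished directly by Proposition~\ref{P4}. For uniqueness, suppose $m_1\neq m_2$ were two medians and let $\gamma\colon[0,1]\to M$ be a minimizing geodesic from $m_1$ to $m_2$; one must check that $\gamma$ stays inside $\bar B(x_0,C(1+C)R)$, which holds in the regime where this proposition is applied (that ball being geodesically convex there, cf.\ Corollary~\ref{C1}). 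Since each $\rho(\cdot,z)$ with $z\in{\rm supp}(\mu)\subset\bar B(x_0,R)$ is convex and $\rho$ is bounded on $\gamma([0,1])\times{\rm supp}(\mu)$, the function $t\mapsto\SF_\mu(\gamma(t))=\int_M\rho(\gamma(t),z)\,\mu(dz)$ is convex on $[0,1]$, and since $\gamma(0)=m_1$ and $\gamma(1)=m_2$ are both global minima with the same value, convexity forces $t\mapsto\SF_\mu(\gamma(t))$ to be constant, equal to $\min\SF_\mu$.

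I would then contradict this by showing that $t\mapsto\SF_\mu(\gamma(t))$ is in fact strictly convex. As ${\rm supp}(\mu)$ is not contained in a single geodesic, there is a point $z_0\in{\rm supp}(\mu)$ with $z_0\notin\gamma([0,1])$. Since $\gamma([0,1])$ is compact and the forward distance is continuous and positive off the diagonal, $\varepsilon:=\inf_{t\in[0,1]}\rho(z_0,\gamma(t))>0$, so the forward ball $U:=B(z_0,\varepsilon/2)$ is disjoint from $\gamma([0,1])$, while $\mu(U)>0$ because $z_0$ lies in the support; moreover $\mu(U)=\mu(U\cap\bar B(x_0,R))$ as $\mu$ is carried by $\bar B(x_0,R)$. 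For every $z\in U\cap\bar B(x_0,R)$ the geodesic $\gamma$ does not contain $z$, so by hypothesis $\rho(\cdot,z)$ is strictly convex along $\gamma$; hence $t\mapsto\int_U\rho(\gamma(t),z)\,\mu(dz)$ is strictly convex, whereas $t\mapsto\int_{M\setminus U}\rho(\gamma(t),z)\,\mu(dz)$ is convex, and adding the two shows $t\mapsto\SF_\mu(\gamma(t))$ is strictly convex on the nondegenerate interval $[0,1]$ (it is nondegenerate precisely because $m_1\neq m_2$). This contradicts the constancy found above, so $m_1=m_2$ and the median is unique.

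I expect the main obstacle to be the step turning the pointwise hypothesis ``$\rho(\cdot,z)$ is strictly convex along geodesics not through $z$'' into honest strict convexity of the averaged functional $\SF_\mu$ along $\gamma$: this is exactly where one uses that a positive amount of mass sits on a whole neighbourhood of a support point lying off $\gamma$, and hence where the hypothesis that ${\rm supp}(\mu)$ is not contained in a single geodesic is essential (a measure supported on one geodesic can genuinely fail to have a unique median). A secondary, more routine point is to make sure the minimizing geodesic joining two medians lies where the strict-convexity hypothesis applies, i.e.\ inside $\bar B(x_0,C(1+C)R)$; this is the same implicit geodesic-convexity matter already present in Proposition~\ref{P2}, controlled by the triangle inequality together with the bound $\rho(m_i,x_0)\le C\rho(x_0,m_i)$ from~\eqref{49} and the standing injectivity-radius assumptions, exactly as in Corollary~\ref{C1} and Proposition~\ref{P3}.
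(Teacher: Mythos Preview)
Your argument is correct and follows the same line as the paper's proof: the paper simply asserts that ``clearly under these assumptions $\SF_\mu$ is strictly convex'' and then concludes uniqueness, whereas you have carefully unpacked that ``clearly'' via the measure-theoretic argument (finding a support point off the geodesic, a neighbourhood of positive mass, and integrating the strict inequality). Your treatment of the geodesic-convexity of $\bar B(x_0,C(1+C)R)$ is also more explicit than the paper's, which leaves this point implicit just as in Proposition~\ref{P2}.
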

\begin{proof}
Clearly under these assumptions $\SF_\mu$ is strictly convex, so it has a unique local minimum, this minimum is global and is attained at a unique point $m\in\bar B(x_0,C(1+C)R)$.
\end{proof}

\begin{remark}
 \label{R2}
Contrarily to the case of $p$-means for $p>1$, we cannot say at this stage that any probability measure $\mu$ with sufficiently small support has a unique median, since we don't know whether $\SF_\mu$ is strictly convex or not. In the next proposition we give a sufficient condition for strict convexity of $\SF_\mu$.
\end{remark}

\begin{prop}
 \label{P9}
Assume $\SK\le k$ and $\ST\ge -\d$ for some $k,\d>0$.
Assume that the injectivity radius at any point of $\bar B(x_0, C(1+C)R)$ is larger than $(C^2+C+1)R$.
 Define 
\begin{equation}
\begin{split}
 \label{58}
\eta=&\min\Bigl\{\int_M\sqrt{k}\cot\left(\sqrt{k}\rho(\pi(v),z)\right){<}v^N,v^N{>}_{\overrightarrow{\pi(v)z}}\,\mu(dz),\\&\ \qquad v\in TM\ \hbox{satisfying}\ \pi(v)\in \bar B(x_0, C(1+C)R),\ F(v)=1\Bigr\}
\end{split}
\end{equation}
where $v^N$ is the normal part of $v$ with respect to the vector $\overrightarrow{\pi(v)z}$ and the scalar product ${<}\cdot,\cdot{>}_{\overrightarrow{\pi(v)z}}$.
If $\eta-\d>0$ then $\SF_\mu$ is strictly convex on $\bar B(x_0, C(1+C)R)$. More precisely, for all $x\in B(x_0, C(1+C)R)$ and for all unit speed geodesic $\g$ starting at $x$, 
\begin{equation}
 \label{59}
(\SF_\mu\circ \g)''(0)\ge \eta-\d.
\end{equation}
\end{prop}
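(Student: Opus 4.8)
The plan is to differentiate $\SF_\mu\circ\g$ twice under the integral sign and then apply the second-variation estimates already developed for the distance function. Concretely, for fixed $z$ in the support of $\mu$, consider the function $a\mapsto \rho(\g(a),z)=D_1(a)$, where $D_1$ is as in Section~2 with $c(0,s)=\g(s)$ and $c(1,s)\equiv z$. Since $\g$ is a unit-speed geodesic, $E'(0)^2=\rho^2\,{<}J^T(0),J^T(0){>}_{T(0)}$ and, crucially, $E''(0)=\ST_{T(0)}(J(0))+I(J,J)$ by~\eqref{14}. First I would reuse the index comparison \cite{Bao-Chern-Shen:00} Lemma~9.5.1 together with the index lemma Lemma~7.3.2 exactly as in the proof of Proposition~\ref{P8}, but keeping the quantity $\sqrt{k}\rho\cos(\sqrt{k}\rho)/\sin(\sqrt{k}\rho)$ in the form $\sqrt{k}\rho\cot(\sqrt{k}\rho)$ attached to the normal part, to obtain
\begin{equation*}
D_1''(0)\ge \frac{1}{D_1(0)}\Bigl(\ST_{T(0)}(J(0))+\sqrt{k}\rho\cot(\sqrt{k}\rho)\,{<}J^N(0),J^N(0){>}_{T(0)}+{<}J^T(0),J^T(0){>}_{T(0)}\Bigr)-\frac{E'(0)^2}{D_1^3(0)}.
\end{equation*}
By~\eqref{48} the last term cancels $\frac{1}{D_1(0)}{<}J^T(0),J^T(0){>}_{T(0)}$ (here $D_1(0)=\rho=r$, $E'(0)^2=r^2{<}J^T(0),J^T(0){>}_{T(0)}$, so $E'(0)^2/D_1^3(0)={<}J^T,J^T{>}_{T}/r$), leaving the purely normal contribution plus the tangent-curvature term.

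Next I would estimate the tangent-curvature term: since $F(\g'(0))=1$ and $F(J(0))=1$, the hypothesis $\ST\ge-\d$ gives $\ST_{T(0)}(J(0))\ge -\d F(T(0))F(J(0))^2\ge -\d\,C_0$ for the appropriate homogeneity normalization — in fact with $\g$ unit-speed and $J$ a unit Jacobi field one gets directly $\ST_{T(0)}(J(0))\ge-\d$. Dividing by $D_1(0)$ is harmless because after the cancellation the $\ST$ term already appears without the $1/D_1(0)$ factor once one tracks that $E''(0)$ itself carries it; more carefully, one writes $D_1''(0)=E''(0)/D_1(0)-E'(0)^2/D_1^3(0)$ and substitutes. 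Then integrating against $\mu(dz)$ and using that the normal part of $v=\g'(0)$ relative to $\overrightarrow{xz}$ is exactly the $v^N$ appearing in~\eqref{58}, the integral of the normal term is bounded below by $\eta$, and the integral of the tangent-curvature term is bounded below by $-\d$ (since $\mu$ is a probability measure). This yields $(\SF_\mu\circ\g)''(0)\ge\eta-\d$, which is~\eqref{59}; strict convexity on $\bar B(x_0,C(1+C)R)$ follows immediately when $\eta-\d>0$, and the injectivity radius bound $(C^2+C+1)R$ guarantees every $z\in\bar B(x_0,R)$ lies within the injectivity domain of every $x\in\bar B(x_0,C(1+C)R)$ — by the triangle-type estimate $\rho(x,z)\le\rho(x,x_0)+\rho(x_0,z)\le C(1+C)R+CR=(C^2+C+1)R$ using~\eqref{49} — so that $\overrightarrow{xz}$ and all the Jacobi-field machinery are well defined and $D_1$ is smooth at $a=0$.

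The main obstacle I anticipate is bookkeeping rather than conceptual: one must handle the case $J(0)$ collinear with $\overrightarrow{xz}$ (i.e. $\g$ pointing toward or away from $z$), where $\rho(\cdot,z)$ is merely smooth with $D_1''(0)=0$ and $v^N=0$, so that point contributes nothing to $\eta$ and the inequality degenerates to $(\SF_\mu\circ\g)''(0)\ge 0+(-\d)$ from that atom — this is why one needs the integral (a positive-mass set of $z$ off the geodesic through $x$ in direction $\g'(0)$) to push $\eta$ strictly above $\d$, and why the definition~\eqref{58} takes an infimum over all unit $v$ and base points in the ball. A secondary technical point is justifying differentiation under the integral sign: since $\mu$ is compactly supported and, on the injectivity domain, $(a,z)\mapsto\rho(\g(a),z)$ and its first two $a$-derivatives are continuous and uniformly bounded on the relevant compact set, dominated convergence applies and $(\SF_\mu\circ\g)''=\int_M D_1''(\cdot)\,\mu(dz)$; this is routine and I would state it without detailed estimates.
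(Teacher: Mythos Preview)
Your approach is essentially the same as the paper's: compute $D_1''(0)=r^{-1}\bigl(E''(0)-{<}J^T(0),J^T(0){>}_{T(0)}\bigr)$ from~\eqref{47bis} and~\eqref{48}, insert the lower bound~\eqref{47} for $E''(0)$, observe that the tangential part cancels, and integrate in $z$ to get $(\SF_\mu\circ\g)''(0)\ge -\d+\int_M\sqrt{k}\cot(\sqrt{k}\rho){<}v^N,v^N{>}\,\mu(dz)\ge\eta-\d$. The paper's proof is exactly this, just more terse; your additional remarks on the collinear case and on differentiation under the integral sign are reasonable side comments the paper omits.

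One bookkeeping slip to clean up: the claim ``with $\g$ unit-speed and $J$ a unit Jacobi field one gets directly $\ST_{T(0)}(J(0))\ge-\d$'' is not quite right as stated, because $T(0)=\partial_t c(0,0)$ has $F(T(0))=r$, not $1$ (the geodesic $t\mapsto c(t,0)$ is parametrized on $[0,1]$). Hence $\ST_{T(0)}(J(0))\ge -\d\,r$ by~\eqref{40}, and it is the division by $D_1(0)=r$ in the formula $D_1''(0)=E''(0)/r-E'(0)^2/r^3$ that produces the final $-\d$. You do say ``more carefully, one writes $D_1''(0)=E''(0)/D_1(0)-E'(0)^2/D_1^3(0)$ and substitutes'', which is the correct fix; just make that the primary computation rather than an afterthought. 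Also, your triangle estimate $\rho(x,x_0)+\rho(x_0,z)\le C(1+C)R+CR$ has an arithmetic inconsistency (the sum as written is $(C^2+2C)R$, not $(C^2+C+1)R$); recheck which of $\rho(x,x_0)$ or $\rho(x_0,x)$ you are bounding and by what, using~\eqref{49}.
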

\begin{proof}
 With the notations of section~\ref{Section2}, from~\eqref{47bis} we have 
\begin{equation}
 \label{60}
D_1''(0)=r^{-1}\left(E''(0)-{<}J^T(0),J^T(0){>}_{T(0)}\right).
\end{equation}
Let $\g(s)=c(0,s)$, the unit speed geodesic with initial condition $v=J(0)$, and $f(s)=\SF_\mu(\g(s))$.
Equation~\eqref{60} together with~\eqref{47} gives 
\begin{equation}
 \label{61}
f''(0)\ge -\d+\int_M\sqrt{k}\cot\left(\sqrt{k}\rho(\pi(v),z)\right){<}v^N,v^N{>}_{\overrightarrow{\pi(v)z}}\,\mu(dz)\ge \eta-\d.
\end{equation}
From this we get the condition for the strict convexity of $\SF_\mu$. 

\end{proof}

For $x\in M$ define the measure $\mu_x=\mu-\mu(\{x\})\d_x$. Then the map $y\mapsto \SF_{\mu_x}(y)$ is differentiable at $y=x$. 

Since 
\begin{equation}
\label{33}
\SF_{\mu}(y)=\SF_{\mu_x}(y)+\mu(\{x\})\rho(y,x) 
\end{equation}
and for $v\in T_xM$, $\SF_{\mu}$ is differentiable in the direction $v$ with derivative
\begin{equation}
\label{33bis}
\langle d\SF_{\mu}, v\rangle=\langle d\SF_{\mu_x},v\rangle+\mu(\{x\})F(-v),
\end{equation}
we see that $x$ is a local minimum of $\SF_\mu$ if and only if for all nonzero $v\in T_xM$
\begin{equation}
\label{34}
\mu(\{x\})F(-v)\ge \langle d\SF_{\mu_x},-v\rangle
\end{equation}
which is equivalent to 
\begin{equation}
\label{38}
\mu(\{x\})\ge \f{\left(F^\ast\left(d\SF_{\mu_x}\right)\right)^2}{F\left(\SL^{-1}(d\SF_{\mu_x})\right)}
\end{equation}
(take $\di -v=\f{\SL^{-1}(d\SF_{\mu_x})}{F\left(\SL^{-1}(d\SF_{\mu_x})\right)}$).
But since $F^\ast=F\circ \SL^{-1}$, we get 
\begin{prop}
\label{L1}
A point $x$ in $M$ is a local minimum of $\SF_\mu$ if and only if
\begin{equation}
\label{35}
\mu(\{x\})\ge F^\ast\left(d\SF_{\mu_x}\right).
\end{equation}
\end{prop}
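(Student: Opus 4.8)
The plan is to unwind the chain of equivalences already assembled in the text right before the statement, and show that the final inequality \eqref{35} is exactly \eqref{38} rewritten. First I would recall that the map $y\mapsto \SF_{\mu_x}(y)$ is differentiable at $y=x$ (this is because the atom at $x$ has been removed, so no term of the form $\rho(\cdot,x)$, which is non-differentiable at $x$, survives). Hence $d\SF_{\mu_x}$ makes sense as an element of $T_x^\ast M$, and the directional derivative of $\SF_\mu$ at $x$ in direction $v$ is given by \eqref{33bis}, namely $\langle d\SF_{\mu},v\rangle=\langle d\SF_{\mu_x},v\rangle+\mu(\{x\})F(-v)$, the extra term coming from differentiating $t\mapsto \mu(\{x\})\rho(\g(t),x)$ at $t=0$ along a curve $\g$ with $\dot\g(0)=v$, whose one-sided derivative is $\mu(\{x\})F(-v)$ since $\rho(\g(t),x)=F(-t v)+o(t)=tF(-v)+o(t)$ for small $t>0$.

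Next I would argue that $x$ is a local minimum of $\SF_\mu$ if and only if every one-sided directional derivative at $x$ is nonnegative, i.e. $\langle d\SF_{\mu_x},v\rangle+\mu(\{x\})F(-v)\ge 0$ for all $v\in T_xM$; this is legitimate because $\SF_\mu$ is convex on the ball under consideration (or at least convex along geodesics through $x$), so nonnegativity of all one-sided derivatives at $x$ is equivalent to $x$ being a minimum. Replacing $v$ by $-v$, this reads $\mu(\{x\})F(-v)\ge \langle d\SF_{\mu_x},-v\rangle$ for all $v$, which is \eqref{34}. Then I would observe that the worst direction is $-v$ proportional to $\SL^{-1}(d\SF_{\mu_x})$: for that choice, $\langle d\SF_{\mu_x},-v\rangle = \langle d\SF_{\mu_x},\SL^{-1}(d\SF_{\mu_x})\rangle/F(\SL^{-1}(d\SF_{\mu_x}))=F^\ast(d\SF_{\mu_x})^2/F(\SL^{-1}(d\SF_{\mu_x}))$ by \eqref{20bis}, while $F(-v)=1$; and for a general direction the ratio $\langle d\SF_{\mu_x},-v\rangle/F(-v)$ is at most $F^\ast(d\SF_{\mu_x})$ by the definition \eqref{19} of the dual norm, which equals that worst-case value since $F^\ast=F\circ\SL^{-1}$ gives $F^\ast(d\SF_{\mu_x})^2/F(\SL^{-1}(d\SF_{\mu_x}))=F^\ast(d\SF_{\mu_x})$. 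Thus \eqref{34} for all $v$ is equivalent to \eqref{38}, which simplifies to \eqref{35}.

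The main obstacle I expect is the differentiability claim and the precise computation of the one-sided derivative of $\SF_{\mu}$ in the direction $v$, i.e. justifying \eqref{33bis}: one must commute differentiation with the integral $\int_M \rho(\g(t),z)\,\mu_x(dz)$, which requires a dominated-convergence argument using that $\rho(\cdot,z)$ is Lipschitz uniformly in $z$ on the relevant compact ball, together with the fact that for $\mu_x$-almost every $z$ the point $x$ is not a cut point of $z$ (so $\rho(\cdot,z)$ is smooth near $x$) — here the injectivity-radius hypothesis carried through the section is what is used. The remaining steps are the short, essentially algebraic manipulations with the Legendre transform recorded in \eqref{17}--\eqref{21}, and these I would present as a brief chain of equalities rather than belaboring them. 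Since the excerpt has already carried out all of this discussion in the lines immediately preceding Proposition~\ref{L1}, the proof itself can be stated very tersely, simply pointing to the computation above.

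\begin{proof}
This has been shown in the discussion preceding the statement: by \eqref{33bis}, $x$ is a local minimum of $\SF_\mu$ if and only if \eqref{34} holds for all nonzero $v\in T_xM$, which is equivalent to \eqref{38} upon taking $-v=\SL^{-1}(d\SF_{\mu_x})/F(\SL^{-1}(d\SF_{\mu_x}))$ and using \eqref{19} and \eqref{20bis} to see that this is the extremal direction. Since $F^\ast=F\circ\SL^{-1}$, the right-hand side of \eqref{38} equals $F^\ast(d\SF_{\mu_x})$, so \eqref{38} is \eqref{35}.
\end{proof}
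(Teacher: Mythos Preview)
Your proposal is correct and follows exactly the paper's own approach: the paper's ``proof'' is precisely the discussion in equations \eqref{33}--\eqref{38} immediately preceding the proposition, culminating in the observation that $F^\ast=F\circ\SL^{-1}$ collapses \eqref{38} to \eqref{35}. Your write-up is in fact slightly more careful than the paper in two places --- you flag that convexity is what makes nonnegativity of all one-sided directional derivatives sufficient (not just necessary) for a local minimum, and you sketch the dominated-convergence justification for differentiating under the integral --- but these are elaborations of the same argument, not a different route.
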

Note that for the Riemannian case this result is due to Le Yang~\cite{Yang:10}.

Define 
the vector
\begin{equation}
\label{31}
H(x)=\grad_y\left(\SF_{\mu_x}(y)\right)|_{y=x}.
\end{equation}
Alternatively, 
\begin{equation}
\label{30}
H(x)=\SL^{-1}\left(\int_{M\backslash\{x\}}\SL\left(-\f1{\rho(x,z)}\overrightarrow{xz}\right)\mu(dz)\right).
\end{equation}
Let $a\mapsto x(a)$ be the path in $M$ defined by $x(0)=x_0$ and 
\begin{equation}
\label{32}
\begin{array}{ccc}
 {\dot x}(a)=&-H(x(a))\ &\hbox{if for all $a'\le a$},\  \mu(\{x(a')\})<F^\ast\left(d\SF_{\mu_{x(a')}}\right) ;\\
 {\dot x}(a)=&0\ &\hbox{if for some $a'\le a$,}\  \mu(\{x(a')\})\ge F^\ast\left(d\SF_{\mu_{x(a')}}\right).
\end{array}
\end{equation}
Define 
\begin{equation}
\label{36}
f(a)=\SF_\mu(x(a)).
\end{equation}
We have for the right derivative of $f$ when $x(a)$ is not a minimal point of $\SF_\mu$:
\begin{align*}
f_+'(a)&=\left\langle d_{x(a)}(\SF_{\mu_{x(a)}}), {\dot x}(a)\right\rangle+\mu(\{x(a)\})F\left(-{\dot x}(a)\right)\\
&=-\left\langle d_{x(a)}(\SF_{\mu_{x(a)}}), \SL^{-1}(d_{x(a)}(\SF_{\mu_{x(a)}}))\right\rangle\\&\quad+\mu(\{x(a)\})F\left(\SL^{-1}(d_{x(a)}(\SF_{\mu_{x(a)}}))\right)\\
&=-F^\ast\left(d_{x(a)}(\SF_{\mu_{x(a)}})\right)^2+\mu(\{x(a)\})F\left(\SL^{-1}(d_{x(a)}(\SF_{\mu_{x(a)}}))\right).
\end{align*}
We get 
\begin{equation}
\label{37}
f_+'(a)=-F^\ast\left(d_{x(a)}(\SF_{\mu_{x(a)}})\right)\left(F^\ast\left(d_{x(a)}(\SF_{\mu_{x(a)}})\right)-\mu(\{x(a)\})\right)
\end{equation}
which is negative as soon as $x(a)$ is not a minimal point of $\SF_\mu$.
From this we get the following
\begin{prop}
\label{P7}
Assume that $\mu$ is supported by a compact ball $\bar B(x_0,R)$, that the support of $\mu$ is not contained in a single geodesic and  that for all $z\in\bar B(x_0,R)$, the forward distance to $z$ is convex, and strictly convex in any geodesic of $\bar B(x_0,C(1+C)R)$ which does not contain $z$. Then the path $a\mapsto x(a)$ converges to the median~$m$ of $\mu$.
\end{prop}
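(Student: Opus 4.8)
The plan is to follow the scheme of the proof of Proposition~\ref{P3}, replacing the differentiability of $\SE_{\mu,p}$ used there by a convexity comparison and using Proposition~\ref{L1} to describe the stopping set of the flow \eqref{32}. Write $\bar B=\bar B(x_0,C(1+C)R)$ and $f(a)=\SF_\mu(x(a))$. By Proposition~\ref{P5}, $\mu$ has a unique median $m\in\bar B$ with $\SF_\mu(m)=\min_{\bar B}\SF_\mu$, and $\SF_\mu$ is continuous and convex along minimal geodesics of $\bar B$. First I would settle the case in which the flow stops: if $x(a_0)$ is a minimal point of $\SF_\mu$ for some $a_0\ge 0$, then by Proposition~\ref{L1} it is a local minimum of $\SF_\mu$, hence $x(a_0)=m$ by the uniqueness in Proposition~\ref{P5}; since $\dot x(a)=0$ for $a\ge a_0$, we get $x(a)=m$ for all $a\ge a_0$ and we are done. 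So I may assume $\dot x(a)=-H(x(a))$ for every $a$; then \eqref{37} shows $f_+'(a)<0$ whenever $x(a)\ne m$, so $f$ is nonincreasing.

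Next I would check that $x(a)\in\bar B$ for all $a$: arguing as in the proof of Proposition~\ref{P4}, if $x\notin\bar B$ then for every $z\in\mathrm{supp}\,\mu\subset\bar B(x_0,R)$ one has, by \eqref{49} and the triangle inequality, $\rho(x,z)\ge\rho(x,x_0)-\rho(z,x_0)>(1+C)R-CR=R$, so $\SF_\mu(x)>R\ge\SF_\mu(x_0)=f(0)$; since $\SF_\mu(x(a))=f(a)\le f(0)$, the path stays in $\{\SF_\mu\le f(0)\}\subset\bar B$. Being nonincreasing and bounded below by $\SF_\mu(m)$, $f(a)$ converges to some $\ell\ge\SF_\mu(m)$.

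The key step is to rule out $\ell>\SF_\mu(m)$. Suppose it holds; then $K:=\{x\in\bar B:\SF_\mu(x)\ge\ell\}$ is compact, does not contain $m$, and $x(a)\in K$ for all $a$. I claim there is $c>0$ such that $F^\ast(d\SF_{\mu_x})-\mu(\{x\})\ge c$ on $K$. Fix $x\in K$, take a minimal geodesic $\g\colon[0,1]\to\bar B$ from $x$ to $m$ and put $v=\dot\g(0)$, so $F(v)=\rho(x,m)$ by \eqref{8}. By convexity of $t\mapsto\SF_\mu(\g(t))$, its right derivative at $0$ is $\le\SF_\mu(m)-\SF_\mu(x)\le\SF_\mu(m)-\ell<0$; by \eqref{33bis} this derivative equals $\langle d\SF_{\mu_x},v\rangle+\mu(\{x\})F(-v)\ge -F^\ast(d\SF_{\mu_x})F(-v)+\mu(\{x\})F(-v)$, hence $F(-v)\bigl(F^\ast(d\SF_{\mu_x})-\mu(\{x\})\bigr)\ge\ell-\SF_\mu(m)$. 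Since $F(-v)\le\SC(x)F(v)=\SC(x)\rho(x,m)$ is bounded on $\bar B$ by a constant $\delta_0=\delta_0(C,R)$, the claim follows with $c=(\ell-\SF_\mu(m))/\delta_0$. Then also $F^\ast(d\SF_{\mu_x})\ge c$, so by \eqref{37} we get $f_+'(a)\le -c^2$ for all $a$, whence $f(a)\to-\infty$, contradicting $f\ge 0$. Therefore $\ell=\SF_\mu(m)$. Finally, $x(a)$ lies in the compact set $\bar B$, on which the continuous function $\SF_\mu$ attains its minimum only at $m$, and $\SF_\mu(x(a))=f(a)\to\SF_\mu(m)$; so every limit point of $x(a)$ is $m$, i.e.\ $x(a)\to m$.

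The main obstacle is the uniform lower bound $F^\ast(d\SF_{\mu_x})-\mu(\{x\})\ge c>0$ on $K$. One cannot simply take a lower-semicontinuity shortcut: at an atom of $\mu$ the map $x\mapsto d\SF_{\mu_x}$ is discontinuous (the contribution $\mu(\{x\})\,d_x\rho(\cdot,x)$ has no limit as $x$ approaches the atom) and $x\mapsto\mu(\{x\})$ is only upper semicontinuous, so the pointwise positivity furnished by Proposition~\ref{L1} need not survive passing to a limit. The comparison along the geodesic toward $m$ bypasses this, at the cost of using that $\SF_\mu$ is convex along minimal geodesics of $\bar B$ and that such geodesics stay in $\bar B$ --- both implicit in the hypotheses of Propositions~\ref{P5} and~\ref{P7}. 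A more minor point is the basic well-posedness of the flow \eqref{32}, which I take as given.
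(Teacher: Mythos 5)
Your proof is correct, and at the top level it follows the same scheme as the paper, whose entire argument is ``similar to the proof of Proposition~\ref{P3}'': descent of $f(a)=\SF_\mu(x(a))$ along the flow via \eqref{37}, trapping of the path in $\bar B(x_0,C(1+C)R)$ by the Proposition~\ref{P1}/\ref{P4} estimate, and identification of the limit through uniqueness of the minimizer from Proposition~\ref{P5}. Where you genuinely depart from the paper is the final identification step. The Proposition~\ref{P3} template would conclude that, since $f(a)$ converges, every limit point of $x(a)$ is a ``critical'' point, which for the median means a point satisfying the condition \eqref{35} of Proposition~\ref{L1}; as you correctly observe, passing that condition to a limit is delicate, because $x\mapsto d\SF_{\mu_x}$ is discontinuous at atoms of $\mu$ and $x\mapsto\mu(\{x\})$ is only upper semicontinuous. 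Your substitute --- comparing the one-sided derivative \eqref{33bis} along the minimal geodesic toward $m$ with the secant slope given by convexity, to get the uniform bound $F^\ast(d\SF_{\mu_x})-\mu(\{x\})\ge c>0$ on $\{\SF_\mu\ge\ell\}$ and hence $f_+'(a)\le -c^2$ --- is a quantitative descent-rate argument that bypasses this semicontinuity issue entirely, at the cost of using (as the paper also implicitly does in Proposition~\ref{P5}) that minimal geodesics between points of the ball stay where the convexity hypothesis applies. So your route buys a rigorous treatment of exactly the point the paper leaves to analogy, while the paper's version is shorter but rests on an unexamined limit-point/criticality step; your remaining implicit assumptions (well-posedness of the flow \eqref{32}, validity of \eqref{37}, geodesic convexity of the ball) are the same ones the paper takes for granted.
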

\begin{proof}
Similar to the proof of proposition~\ref{P3}
\end{proof}

\section{An algorithm for computing  $p$-means}\label{Section5}
\setcounter{equation}0

\begin{lemma}
 \label{L4}
Assume $\SK\ge -\b^2$, $\ST\le \d'$, $\SD\le D$ with $\b>0$, $\d'\ge 0$ $D\ge 1$. For $p> 1$, $r>0$, define 
\begin{equation}
 \label{68}
H(r)=H_{p,\b,D,\d'}(r):=pr^{p-2}\left(D^2\max\left((p-1),r\b\coth (r\b)\right)+\d'r\right).
\end{equation}
If  $\mu$ is a probability measure on $M$ with bounded support and $x\in M$, define
\begin{equation}
 \label{69}
H_{\mu}(x)=H_{\mu,p,\b,D,\d'}(x):=\int_MH_{p,\b,D,\d'}(\rho(x,y))\,d\mu.
\end{equation}

If $t\mapsto\g(t)$ is a unit speed geodesic  then for all $t$
\begin{equation}
 \label{70}
(\SE_{\mu,p}\circ \g)''(t)\le H_{\mu}(\g(t)).
\end{equation}
\end{lemma}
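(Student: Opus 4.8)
The plan is to reduce the bound on $(\SE_{\mu,p}\circ\g)''(t)$ to the pointwise second-derivative estimate already obtained in Proposition~\ref{P6}. Fix a unit speed geodesic $t\mapsto\g(t)$ and fix a parameter value $t_0$; it suffices to prove $(\SE_{\mu,p}\circ\g)''(t_0)\le H_\mu(\g(t_0))$. First I would differentiate under the integral sign in~\eqref{16}: writing $\SE_{\mu,p}(x)=\int_M\rho^p(x,z)\,\mu(dz)$, one has $(\SE_{\mu,p}\circ\g)''(t_0)=\int_M\big(\rho^p(\g(\cdot),z)\big)''(t_0)\,\mu(dz)$, provided the interchange is justified (the support of $\mu$ is compact, and on a neighborhood of that support times a neighborhood of $\g(t_0)$ the function $\rho^p$ and its first two derivatives along $\g$ are continuous, hence bounded; this legitimizes both differentiations under the integral).

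Next, for each fixed $z$ in the support of $\mu$, I would identify $\big(\rho^p(\g(\cdot),z)\big)''(t_0)$ with the quantity $D_p''(0)$ of Section~\ref{Section2}. Indeed, set $x=\g(t_0)$ and let $c(t,s)$ be the family of minimizing geodesics from $c(0,s)=\g(t_0+s)$ to $c(1,s)\equiv z$; then $s\mapsto c(0,s)=\g(t_0+s)$ is a unit speed geodesic, so $D_p(s)=\rho^p(c(0,s),z)=\rho^p(\g(t_0+s),z)$ and hence $\big(\rho^p(\g(\cdot),z)\big)''(t_0)=D_p''(0)$. Here $r=\rho(x,z)$, and the hypothesis that $\g$ is unit speed gives $F(J(0))=F(\dot\g(t_0))=1$, which is exactly the normalization used at the end of the proof of Proposition~\ref{P6}. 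Applying Proposition~\ref{P6} — whose hypotheses $\SK\ge-\b^2$, $\ST\le\d'$, $\SD\le D$ are precisely the standing assumptions of the lemma, and whose injectivity-radius hypothesis is subsumed here — yields
\begin{equation*}
\big(\rho^p(\g(\cdot),z)\big)''(t_0)=D_p''(0)\le pr^{p-2}\Big(D^2\max\big(p-1,\b r\coth(\b r)\big)+\d' r\Big)=H_{p,\b,D,\d'}(\rho(x,z)).
\end{equation*}
Integrating this inequality against $\mu(dz)$ and using the definition~\eqref{69} of $H_\mu$ gives $(\SE_{\mu,p}\circ\g)''(t_0)\le\int_M H_{p,\b,D,\d'}(\rho(x,z))\,\mu(dz)=H_\mu(\g(t_0))$, as desired.

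The only genuinely delicate point is the differentiation under the integral, and within that, the smoothness of $z\mapsto\rho^p(\g(\cdot),z)$ and its $t$-derivatives: one needs that for $\mu$-a.e.\ $z$ the geodesic from $\g(t)$ to $z$ depends smoothly on $t$ near $t_0$, which holds under the injectivity-radius condition ensuring $\g(t)$ is not in the cut locus of $z$ (and vice versa) — this is implicit in Section~\ref{Section2}, where the second variation formulas~\eqref{13}–\eqref{14} already presuppose a smooth family of minimizing geodesics. Away from that, the argument is a routine application of Proposition~\ref{P6} inside the integral, so I do not expect any further obstacle.
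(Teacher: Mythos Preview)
Your proposal is correct and follows essentially the same approach as the paper: apply the pointwise bound of Proposition~\ref{P6} to $D_p''(0)=\big(\rho^p(\g(\cdot),z)\big)''(t_0)$ for each $z$, then integrate against $\mu$. The paper's own proof is just these two lines, without the explicit justification of differentiation under the integral that you supply.
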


\begin{proof}
 For $x,y\in M$, $r=\rho(x,y)$, $s\mapsto \g(s)=c(0,s)$ a unit speed geodesic started at $x=c(0,0)$, $t\mapsto c(t,s)$ the geodesic satisfying $c(1,s)=y$, 
we have
$$
D_p''(0)\le pr^{p-2}\left(D^2\max \left((p-1),r\b\coth (r\b)\right)+\d'r\right).
$$
Integrating with respect to $y$ this
equation
gives the result.
\end{proof}

\begin{remark}
 \label{R4}
If $p\ge 2$ or $\mu$ has a smooth density then the function $H_\mu$ is bounded on all compact sets.
\end{remark}

 The main result is the following (see \cite{Le:04} for a similar result in a Riemannian manifold). 
\begin{prop}
 \label{P10}
Assume $-\b^2\le  \SK\le k$, $-\d\le \ST\le \d'$, $\SC\le C$ and $\SD\le D$ for some $\b, k,\d,\d'>0$ and $C, D\ge 1$. Let $p> 1$.
Assume the support of $\mu$ is contained in $B(x_0,R)$ and $\SE_{\mu,p}$ is strictly convex on $\bar B(x_0, C(C+1)R)$.
Assume furthermore that the function $H_\mu=H_{\mu,p,\b,D,\d'}$ is bounded on $\bar B(x_0, C(C+1)R)$ by a constant $C_H>0$, and that the injectivity radius at any point of $\bar B(x_0, C(C+1)R)$ is larger than $C^2+C+1$.
Define the gradient algorithm as follows: 

\smallbreak

{\bf Step 1}
Start from a point  $x_1\in B(x_0, C(C+1)R)$ such that $\SE_{\mu,p}(x_1)\le R^p$ (take for instance $x_1=x_0$) and let $k=1$. 

{\bf Step 2} 
Let
\begin{equation}
 \label{64}
v_k=\f{\grad(-\SE_{\mu,p}(x_k)))}{F\left(\grad(-\SE_{\mu,p}(x_k))\right)},\qquad\hbox{}\qquad t_k=\f{F\left(\grad(-\SE_{\mu,p}(x_k))\right)}{C_H}.
\end{equation}
and let $\g_k$ be the geodesic satisfying $\g_k(0)=x_k$, $\dot\g_k(0)=v_k$. Define 
\begin{equation}
 \label{67}
x_{k+1}=\g_k(t_k)
\end{equation}
then do again step~2 with $k=k+1$.

\smallbreak

Then the sequence $(x_k)_{k\ge 1}$ converges to $e_p$.
\end{prop}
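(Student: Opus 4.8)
The plan is to show that the gradient-descent iteration of Step~2 is an instance of a descent scheme for a function whose second derivative along geodesics is controlled by $C_H$, and then to run the standard ``quadratic upper bound'' argument. Write $f = \SE_{\mu,p}$ and $g = -f$. First I would record, via Lemma~\ref{L4}, that along any unit speed geodesic $\g$ one has $(f\circ\g)''(t)\le H_\mu(\g(t))\le C_H$ as long as $\g(t)\in\bar B(x_0,C(C+1)R)$. Consequently, for $x_k$ in this ball and $\g_k$ the unit speed geodesic with $\dot\g_k(0)=v_k$, Taylor's formula with integral remainder gives, for $t\ge 0$ with $\g_k([0,t])\subset\bar B(x_0,C(C+1)R)$,
\begin{equation*}
f(\g_k(t))\le f(x_k)+t\,(f\circ\g_k)'(0)+\tfrac12 C_H t^2 .
\end{equation*}
Now $(f\circ\g_k)'(0)=\langle d_{x_k}f,v_k\rangle=-\langle d_{x_k}g,v_k\rangle$, and by the definition of $v_k$ together with \eqref{20}, \eqref{20bis} this equals $-F^\ast(d_{x_k}g)=-F(\grad g(x_k))$. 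Plugging in $t=t_k=F(\grad g(x_k))/C_H$ yields the key decrease estimate
\begin{equation*}
f(x_{k+1})\le f(x_k)-\frac{F(\grad g(x_k))^2}{2C_H}
= f(x_k)-\frac{F^\ast\bigl(d_{x_k}\SE_{\mu,p}\bigr)^2}{2C_H}.
\end{equation*}

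Before this is legitimate I must check the trajectory never leaves $\bar B(x_0,C(C+1)R)$, so that both the curvature bounds and Lemma~\ref{L4} apply along the whole segment $\g_k([0,t_k])$; this is the step I expect to be the main obstacle, since the geodesic could in principle overshoot. I would argue as in the proof of Proposition~\ref{P3}: since $\SE_{\mu,p}(x_1)\le R^p$ and the displayed inequality shows $f(x_{k+1})\le f(x_k)$ whenever the full segment stays in the ball, an inductive argument using the fact that $\SE_{\mu,p}(x)\ge R^p$ for $x\notin\bar B(x_0,C(C+1)R)$ (Proposition~\ref{P1}) confines the iterates; one also needs that $t_k$ is small enough — here the bound $F(\grad g(x_k))\le$ (something controlled on the compact ball, using $\SD\le D$ and the explicit formula \eqref{30}) together with the assumption that the injectivity radius exceeds $C^2+C+1$ guarantees $t_k$ stays below the injectivity radius so $\g_k$ is a genuine minimizing geodesic and cannot jump outside. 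A short continuity/connectedness argument then promotes ``the segment stays in the ball'' to all $k$.

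Granted confinement, the decrease inequality gives a nonincreasing sequence $f(x_k)$ bounded below (by $\inf_{\bar B}\SE_{\mu,p}$), hence convergent; summing telescopically, $\sum_k F^\ast(d_{x_k}\SE_{\mu,p})^2\le 2C_H\bigl(f(x_1)-\inf f\bigr)<\infty$, so $F^\ast(d_{x_k}\SE_{\mu,p})\to 0$. Since $(x_k)$ lies in the compact ball $\bar B(x_0,C(C+1)R)$ it has limit points, and at any limit point $x_\infty$ continuity of $d\SE_{\mu,p}$ forces $d_{x_\infty}\SE_{\mu,p}=0$, i.e. $x_\infty$ is a critical point of $\SE_{\mu,p}$. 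By strict convexity of $\SE_{\mu,p}$ on $\bar B(x_0,C(C+1)R)$ (Proposition~\ref{P2}) there is exactly one such critical point, namely $e_p$; therefore the whole sequence converges to $e_p$. Finally I would double-check the edge case $\grad g(x_k)=0$: then $x_k=e_p$ already and the iteration is stationary, so the statement still holds.
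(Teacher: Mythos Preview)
Your argument is correct and follows essentially the same route as the paper's: a second-order Taylor bound via Lemma~\ref{L4} gives the decrease $f(x_{k+1})\le f(x_k)-F(\grad(-\SE_{\mu,p})(x_k))^2/(2C_H)$, confinement to $\bar B(x_0,C(C+1)R)$ comes from $\SE_{\mu,p}>R^p$ outside the ball (Proposition~\ref{P1}), and convergence follows from compactness together with strict convexity forcing every limit point to be $e_p$; your care about the whole segment $\g_k([0,t_k])$ staying in the ball is in fact more scrupulous than the paper, which glosses over this point. One small slip to fix: since $F^\ast$ need not be reversible, the quantity in your decrease estimate is $F^\ast(-d_{x_k}\SE_{\mu,p})=F(\grad(-\SE_{\mu,p})(x_k))$, not $F^\ast(d_{x_k}\SE_{\mu,p})$, though this is harmless for the conclusion as both vanish exactly when $d_{x_k}\SE_{\mu,p}=0$.
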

\begin{proof}
We first prove that the sequence $(\SE_{\mu,p}(x_k))_{k\in \NN}$ is nonincreasing. For this we write 
\begin{equation}
 \label{72}
\begin{split}
\SE_{\mu,p}(\g_k(t_{k}))&\le \SE_{\mu,p}(\g_k(0))+\left\langle d \SE_{\mu,p}, v_k\right\rangle t_{k}+C_H\f{t_{k}^2}{2}\\&
\le \SE_{\mu,p}(\g_k(0))-F\left(\grad(-\SE_{\mu,p}(x_k))\right)\f1{C_H}F\left(\grad(-\SE_{\mu,p}(x_k))\right)\\
&+\f{C_H}{2}\left(\f{F\left(\grad(-\SE_{\mu,p}(x_k))\right)}{C_H}\right)^2\\
&=\SE_{\mu,p}(\g_k(0))-\f{C_H}{2}\left(\f{F\left(\grad(-\SE_{\mu,p}(x_k))\right)}{C_H}\right)^2.
\end{split}
\end{equation}
This proves that the sequence is nonincreasing. As a consequence, for all $k\ge 1$, $x_k\in \bar B(x_0, C(C+1)R)$, since $\SE_{\mu,p}(x_k)\le R^p$ and for all $x\not\in \bar B(x_0, C(C+1)R)$, $\SE_{\mu,p}(x)> R^p$.

Next we prove that $\SE_{\mu,p}(x_k)$ converges to $\SE_{\mu,p}(e_p)$. We know that $\SE_{\mu,p}(x_k)$ converges to $a\ge \SE_{\mu,p}(e_p)$. Extracting a subsequence $x_{k_\ell}$ converging to some $x_\infty\in  \bar B(x_0, C(C+1)R)$, this implies that $t_{k_\ell}$ converges to $0$. But this is possible only if $x_\infty=e_p$, which implies that $a= \SE_{\mu,p}(e_p)$. As a consequence, any converging subsequence has  $e_p$ as a limit, and this implies that $x_k$ converges to $e_p$.
\end{proof}

\begin{remark}
 \label{R5}
For this result we need the Hessian of $\SE_{\mu,p}$ to be bounded, and the subgradient algorithm in Riemannian manifolds as developed in~\cite{Yang:10} does not work. The reason is that for this algorithm, we would need to take  
$$
v_k=\f{\grad_{\overrightarrow{x_ke_p}}(-\SE_{\mu,p}(x_k)))}{F\left(\grad_{\overrightarrow{x_ke_p}}(-\SE_{\mu,p}(x_k))\right)}
$$
where $\grad_{\overrightarrow{x_ke_p}}$ denotes the gradient with respect to the metric ${<}\cdot,\cdot{>}_{\overrightarrow{x_ke_p}}$. So we would need to know $e_p$!
\end{remark}
\begin{cor}
 \label{E1}
Let  $p=2$. If  $\di R\le \f1{C(C+1)^2\sqrt k}\arctan\left(\f{\sqrt k}{C\d^2}\right)$ or $M$ has nonpositive flag curvature, then the algorithm of Proposition~\ref{P10} can be applied with the appropriate constants
\end{cor}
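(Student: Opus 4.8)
The plan is to show that, when $p=2$, each of the two stated hypotheses forces every assumption of Proposition~\ref{P10} to hold, so that the gradient algorithm described there converges to $e_2$. The ambient bounds $-\b^2\le\SK\le k$, $-\d\le\ST\le\d'$, $\SC\le C$, $\SD\le D$ and the assumption ${\rm supp}(\mu)\subset B(x_0,R)$ are part of the setup, so only three things remain to be checked: (a) $\SE_{\mu,2}$ is strictly convex on $\bar B(x_0,C(C+1)R)$; (b) $H_\mu=H_{\mu,2,\b,D,\d'}$ is bounded on $\bar B(x_0,C(C+1)R)$ by some $C_H>0$; (c) the injectivity radius at each point of that ball exceeds the threshold required in Proposition~\ref{P10}.

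Item (b) is immediate: since $p=2\ge2$, Remark~\ref{R4} gives that $H_\mu$ is bounded on every compact set, hence on the compact ball $\bar B(x_0,C(C+1)R)$, and one takes $C_H$ to be such a bound. (Explicitly, at $p=2$ formula~\eqref{68} reads $H(r)=2\big(D^2\max(1,r\b\coth(r\b))+\d'r\big)$, continuous on $[0,\infty)$, hence bounded over the bounded set of values taken by $\rho$ on the ball.) For item (a) in the first case, recall from the Remark after Proposition~\ref{P8} that $R(2,k,\d,C)=\f1{\sqrt k}\arctan\!\big(\f{\sqrt k}{C^2\d}\big)$ for $p\ge2$, so the hypothesis on $R$ is precisely condition~\eqref{51} at $p=2$; then for $x\in\bar B(x_0,C(C+1)R)$ and $z\in{\rm supp}(\mu)\subset B(x_0,R)$, \eqref{49} and the triangle inequality give $\rho(x,z)<C(C+1)^2R\le R(2,k,\d,C)$, so Proposition~\ref{P8} yields $D_2''(0)>0$ and hence, exactly as in the proof of Corollary~\ref{C1}, strict convexity of $\SE_{\mu,2}$ on $\bar B(x_0,C(C+1)R)$. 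For item (c), granting the injectivity-radius hypothesis used in Corollary~\ref{C1} (vacuous in the Cartan-Hadamard case below), the radius exceeds $R(2,k,\d,C)\ge C(C+1)^2R$, and $C(C+1)^2\ge C^2+C+1$ for $C\ge1$, so it clears the threshold in Proposition~\ref{P10}. All hypotheses of Proposition~\ref{P10} then hold, and its conclusion is convergence to $e_2$.

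In the second case ($\SK\le0$, so $M$ is a Cartan-Hadamard manifold), Remark~\ref{R3} lets us take $R(2,k,\d,C)$ as large as we please, so condition~\eqref{51} holds for every $R>0$ and the argument for (a) is unchanged; and by Remark~\ref{R3bis} the injectivity radius is everywhere infinite, so (c) is automatic. Thus Proposition~\ref{P10} again applies with the appropriate constants, which proves the corollary.

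I do not expect a genuine obstacle: the statement is essentially an assembly of Proposition~\ref{P10} with Corollary~\ref{C1} and Remarks~\ref{R4},~\ref{R3},~\ref{R3bis}. The one point needing care is the arithmetic of the nested ball radii — verifying that $R\le\f1{C(C+1)^2}R(2,k,\d,C)$ is at once strong enough to force $\rho(x,z)<R(2,k,\d,C)$ (hence strict convexity on $\bar B(x_0,C(C+1)R)$, via Proposition~\ref{P8}, using that ${\rm supp}(\mu)$ lies in the open ball) and compatible with the injectivity-radius requirement of Proposition~\ref{P10} — together with the $p=2$ identity $R(2,k,\d,C)=\f1{\sqrt k}\arctan\big(\f{\sqrt k}{C^2\d}\big)$.
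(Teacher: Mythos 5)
Your argument is essentially the paper's own proof: the paper disposes of the corollary in one line by invoking Proposition~\ref{P8} (via the radius condition~\eqref{51} at $p=2$, where $R(2,k,\d,C)=\f1{\sqrt k}\arctan\bigl(\f{\sqrt k}{C^2\d}\bigr)$, the exponents in the stated bound being a typo) to get strict convexity of $\SE_{\mu,2}$ on $\bar B(x_0,C(C+1)R)$, exactly as you do. Your additional checks --- boundedness of $H_\mu$ via Remark~\ref{R4} since $p=2$, the injectivity-radius threshold, and the Cartan--Hadamard case via Remarks~\ref{R3} and~\ref{R3bis} --- are left implicit in the paper, so your write-up is simply a more careful version of the same argument.
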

\begin{proof}
With this assumption, by Proposition~\ref{P8} the function
 $\SE_{\mu,2}$ is strictly convex on $\bar B(x_0,C(C+1)R)$. 
 \end{proof}

%
%

\providecommand{\bysame}{\leavevmode\hbox to3em{\hrulefill}\thinspace}

\end{document}